\newtheorem{theorem}{Theorem}[section]
\newtheorem{lemma}[theorem]{Lemma}
\newtheorem{proposition}[theorem]{Proposition}
\newtheorem{corollary}[theorem]{Corollary}
\newtheorem{conjecture}[theorem]{Conjecture}
\theoremstyle{definition}
\newtheorem{definition}[theorem]{Definition}
\newcommand{\Ad}{\operatorname{Ad}}
\newcommand{\Aut}{\operatorname{Aut}}
\newcommand{\TAut}{\operatorname{TAut}}
\newcommand{\GL}{\mbox{GL}}
\def\Ind{\operatorname{Ind}}
\def\SL{\operatorname{SL}}
\def\End{\operatorname{End}}
\def\Id{\operatorname{Id}}
\def\Ch{\operatorname{Char}}
\theoremstyle{remark}
\begin{document}
\vskip-5cm

\baselineskip 19 pt

\title[Automorphisms of the Tame Polynomial Automorphism Group] {On Automorphisms of the Tame Polynomial Automorphism Group in Positive Characteristic}
\voffset-2cm

\author[A. Belov-Kanel, A. Elishev, and J.-T. Yu ]
{Alexei Belov-Kanel, Andrey Elishev, and Jie-Tai Yu}
\address{Department of Mathematics, Bar-Ilan University,
Ramat-Gan, 52900, Israel} \email{beloval@cs.biu.ac.il,\ kanelster@gmail.com}
\address{Department of Discrete Mathematics, Moscow Institute of Physics and Technology, Dolgoprudny, Moscow Region, 141700, Russia}
\email{elishev@phystech.edu}
\address{College of Mathematics and Statistics, Shenzhen University, Shenzhen, 518061, China} \email{jietaiyu@szu.edu.cn}

\thanks{Supported by the Russian Science Foundation grant No. 17-11-01377.}

\subjclass[2010] {Primary 14R10, 13F20. Secondary 16S10, 16W20, 16Z05.}

\keywords{Polynomial automorphisms, tame automorphisms.}

\begin{abstract}
In this note, which is a continuation of the paper \cite{KBYu}, we prove that over algebraically closed field $K$ of positive
characteristic $\neq 2$ every automorphism of the group of origin-preserving automorphisms of the polynomial algebra
$K[x_1,\ldots, x_n]$ ($n>3$) which fixes every diagonal matrix preserves, up to composition with a linear inner
automorphism, every tame automorphism.
\end{abstract}

\maketitle

\qquad \qquad Dedicated to Prof. B.I. Plotkin on the occasion of his 95th birthday.

\sloppy

\section{Introduction}
This paper serves as commentary and supplement to the previous study \cite{KBYu} concerning $\Ind$-schemes of polynomial
algebra automorphisms defined over algebraically closed field.

The Jacobian conjecture of Keller \cite{Keller} is a notoriously hard open problem in the theory of polynomial mappings, which
over the course of its existence has been a source of interesting research directions in algebra and algebraic geometry (see
\cite{BBRY, BCW, vdE, Moh1} and references therein). Relatively recently a connection \cite{K-BK1, K-BK2, Tsu2, Tsu1}
between the Jacobian conjecture and the conjecture of Dixmier \cite{Dix} on the endomorphisms of the Weyl algebra has been
discovered.

The study of $\Ind$-schemes of polynomial endomorphisms and automorphisms as an approach to the Jacobian conjecture goes
back to Shafarevich \cite{Shafarevich}, who introduced the concept of $\Ind$-scheme and studied it in detail. An important
result linking together the geometry, algebra, and combinatorics of this area is the theorem of Anick \cite{An} on approximation,
in the formal power series topology, of polynomial endomorphisms with constant non-zero Jacobian by so-called tame
automorphisms.

As it turns out, tame automorphisms play a prominent role in the connection between the Jacobian and the Dixmier conjectures
\cite{K-BK1}. In particular, in \cite{K-BK1} an isomorphism between subgroups of tame automorphisms of the Weyl and
Poisson algebras has been discovered; this has led the authors of \cite{K-BK1} to conjecture the isomorphism between the entire
automorphism groups. Also, a symplectic (i.e. for Poisson polynomial algebra) version of Anick's theorem holds, as was
established in \cite{KGE}, which allows, with certain appropriate modifications, to attack the aforementioned isomorphism
conjecture with approximation techniques.

In light of Anick's approximation theorem, Shafarevich's approach, as well as the stable equivalence between the Jacobian and the
Dixmier conjectures, it is sensible to devote resources to the study of automorphisms and $\Ind$-automorphisms (i.e.
automorphisms which are algebraic and respect the $\Ind$-structure) of the groups ($\Ind$-schemes) of polynomial
automorphisms of various algebras (such as: polynomial algebra $K[x_1,\ldots, x_n]$, free algebra $K\langle z_1,\ldots,
z_n\rangle$, Poisson algebra, Weyl algebra etc.). The theory of such objects also has a different origin in the universal algebra;
this was pioneered by B. I. Plotkin \cite{BIP2, BIP1}. Furthermore, from a purely algebro-geometric viewpoint this theory
belongs to the realm of affine algebraic geometry, particularly to the geometry of automorphism groups of varieties. An extensive
treatise on the subject is due to Furter and Kraft \cite{FurKr}.

In \cite{KBYu}, the study of $\Ind$-schemes was merged with topological (approximation) results of Anick in order to shed
light on certain aspects of the structure of $\Aut\Aut$. In particular, it has been demonstrated that over an algebraically closed
field $K$ of characteristic zero every $\Ind$-automorphism of $\Aut K[x_1,\ldots, x_n]$ ($n\geq 3$) is inner. A large part of the
argument is of combinatorial nature, and it was natural to try and extend it to the positive-characteristic case. The cornerstone of
the argument -- or rather, that for the case $n>3$ (the number of algebra generators) -- is the Proposition 4.3 of \cite{KBYu},
originally due to E. Rips, which in turn relies on a certain statement, referred to in \cite{KBYu} (Lemma 4.12) as the lemma of
Rips, on the generators of the subgroup of tame automorphisms.

The argument works well in characteristic zero. However in the case of positive characteristic certain complications related to the
structure of spaces of homogeneous polynomials present themselves. This paper aims to address the issue by circumventing the
problem through partial adaptation of the combinatorial argument from the case $n=3$ to $n>3$. The lemma of Rips in the
positive characteristic thus becomes an interesting side conjecture.

The main body of the paper (Section 2) consists of the subsection containing necessary preliminaries, the subsection devoted to
the study of implications of maximal torus stabilization for the action on linear automorphisms, and, lastly, the subsection
dedicated to the proof of the main result (Theorem \ref{thmripsmain}).

\subsection{Acknowledgements}


We thank Andrei M. Raigorodskii for fruitful and stimulating discussions and numerous helpful suggestions on improvement of
this paper.

The work is supported by the RSF Grant No. 17-11-01377.

\section{Automorphisms of $\Aut$ and $\TAut$}

\subsection{Definitions and preliminaries}

Throughout, the base field $K$ is algebraically closed and has positive characteristic $p\neq 2$. The number of generators of the
polynomial algebra $K[x_1, \ldots, x_n]$ is $n\geq 4$. The generators $x_i$ are fixed and carry degree one.

\begin{definition}
We denote by $\Aut K[x_1, \ldots, x_n]$ the group of $K$-algebra automorphisms  of $K[x_1, \ldots, x_n]$. The group
operation $\circ$ is composition of automorphisms. Every element $\varphi\in \Aut K[x_1, \ldots, x_n]$ is specified by an
$n$-tuple of polynomials
$$
(F_1(x_1,\ldots, x_n),\ldots, F_n(x_1,\ldots, x_n)),\;\;F_i(x_1,\ldots, x_n) \equiv \varphi(x_i)
$$
-- the images of algebra generators $x_i$ under $\varphi$. An arbitrary $n$-tuple $(F_1,\ldots, F_n)$ of polynomials is the same
as an algebra endomorphism $\varphi \in\End K[x_1, \ldots, x_n]$ (the correspondence is one-to-one); in order to specify an
automorphism -- i.e. an invertible endomorphism -- the coefficients of $(F_1,\ldots, F_n)$ must satisfy certain identities.
\end{definition}

\begin{definition}
The subgroup $\Aut_0 K[x_1, \ldots, x_n]$ of $\Aut K[x_1, \ldots, x_n]$ is by definition the group of origin-preserving
automorphisms; its elements are given by $n$-tuples $(\varphi(x_1), \ldots, \varphi(x_n))$, with the degree-zero term of all
$\varphi(x_i)$ being zero.
\end{definition}

We will work with $\Aut_0 K[x_1, \ldots, x_n]$ in this paper.

\begin{definition}
An automorphism $\varphi\in \Aut_0 K[x_1, \ldots, x_n]$ is linear, if all $\varphi(x_i)$ are linear, i.e. of the form
$$
\varphi(x_i) = \sum_{j=1}^n a_{ij}x_j,\;\;a_{ij}\in K.
$$
\end{definition}

Obviously, $\varphi$ is a linear automorphism iff the matrix $[a_{ij}]$ is invertible. We denote by $\GL_n(K)$ the subgroup of
linear automorphisms.

\smallskip

\begin{definition} \label{defelementaryaut}
  An automorphism $\varphi\in \Aut K[x_1, \ldots, x_n]$ is called elementary if it is of the form
  $$
(x_1,\ldots, x_i + P(x_1,\ldots, \hat{x}_i,\ldots, x_n), \ldots, x_n)
  $$
  -- i.e. if it acts as the identity map on all generators except one, $x_i$, which is mapped to $x_i + P(x_1,\ldots, \hat{x}_i,\ldots, x_n)$, where the polynomial $P$ does not
  depend on $x_i$.
\end{definition}

Note that the following elementary and useful property holds: if
$$
\varphi: x_i \to x_i + P(x_1, \ldots, \hat{x}_i,\ldots, x_n)
$$
is elementary, and
$$
P = \sum_k M_k
$$
is the decomposition of $P$ into the sum of its monomials, then $\varphi$ may be presented as a composition of elementary
automorphisms
$$
x_i\to x_i + M_k
$$
corresponding to the monomials $M_k$ (these elementaries evidently commute with one another). We will frequently utilize this
property in the proofs below.

\smallskip

The subgroup $\GL_n(K)$ of linear (origin-preserving) automorphisms is, as a group, essentially the same as the group of all
invertible over $K$ matrices. We will therefore prefer not to make a distinction between the two points of view and use the terms
"linear automorphism" and "matrix" interchangeably.

\begin{definition}
  We denote by $\TAut K[x_1, \ldots, x_n]$ the subgroup generated by $\GL_n(K)$ together with all elementary automorphisms. Elements of $\TAut K[x_1, \ldots, x_n]$ are called tame automorphisms.
\end{definition}

By restricting $P$ in Definition \ref{defelementaryaut} to have zero degree-zero term, we obtain the definition of
origin-preserving elementary automorphisms and, consequently, that of the group $\TAut_0 K[x_1, \ldots, x_n]$ of
origin-preserving tame automorphisms.

Henceforth we will generally mean by "elementary automorphisms" those which preserve the origin, in accordance with our
general convention to restrict our attention to $\Aut_0$.

\smallskip

\begin{definition}\label{defautaut}
The groups $\Aut \Aut K[x_1, \ldots, x_n]$, $\Aut \Aut_0 K[x_1, \ldots, x_n]$, \\
$\Aut \TAut K[x_1, \ldots, x_n]$ and $\Aut \TAut_0 K[x_1, \ldots, x_n]$ consist of group automorphisms of the respective
groups.
\end{definition}

Thus, an element $\Phi$ of $\Aut \Aut_0$, for example, takes an origin-preserving automorphism $\varphi$ and returns the
origin-preserving automorphism $\Phi(\varphi)$; furthermore, $\Phi$ is one-to-one, the automorphism composition is preserved,
and the identity automorphism is mapped to itself.

It is well known that $\Aut K[x_1, \ldots, x_n]$ admits the structure of an $\Ind$-scheme via filtration by total degree; this can
be made manifest by embedding $\Aut K[x_1, \ldots, x_n]$ into $\End K[x_1, \ldots, x_n]\times \End K[x_1, \ldots, x_n]$ as
$$
\varphi\mapsto (\varphi, \varphi^{-1})
$$
and imposing the polynomial identities on the coefficients of $\varphi$, $\varphi^{-1}$ stemming from the identities
$\varphi\circ \varphi^{-1} = \varphi^{-1}\circ \varphi = \Id$. Filtration by total degree of $\varphi$ and $\varphi^{-1}$ then
leads to an inductive system of schemes over $K$ whose direct limit corresponds to $\Aut K[x_1, \ldots, x_n]$.

This immediately leads to the notion of $\Ind$-morphism as a mapping which restricts to a scheme morphism on the components
of the filtration and, particularly, the definition of the subgroup $\Aut_{\Ind}\Aut$ of $\Ind$-automorphisms.



Let $K$ be algebraically closed.

\begin{definition}
  An $m$-torus $T^m$ over $K$ is the set $T^m = (K^{\times})^m$ together with abelian group structure defined by coordinate-wise multiplication.
\end{definition}

An $n$-torus $T^n$ is naturally embedded in $\GL_n(K)$: a point $(\lambda_1,\ldots, \lambda_n)$ corresponds to the linear
automorphism
$$
\sigma_{\lambda}: (x_1,\ldots, x_n)\rightarrow (\lambda_1x_1,\ldots, \lambda_nx_n).
$$
We will call this embedding $T^n\subset \Aut_0 K[x_1, \ldots, x_n]$ standard and usually refer to its image as the standard, or
maximal, torus.

More generally, one may speak of torus actions on the polynomial algebra $K[x_1, \ldots, x_n]$:
$$
\sigma: T^m\rightarrow \Aut K[x_1, \ldots, x_n].
$$

There is a well known theorem, due to A. Bia\l{}ynicki-Birula \cite{Bialynicki-Birula1,Bialynicki-Birula2}, that states that in the
case of algebraically closed field $K$, any faithful (i.e. with trivial kernel) maximal torus action $\sigma$ is conjugate to a
representation: there is an automorphism $\varphi$ (coordinate change) such that $\varphi\circ \sigma\circ\varphi^{-1}$ is linear.
In fact, the latter can always be made diagonal and, since maximal tori over algebraically closed field lie in one conjugacy class in
$\Aut K[x_1, \ldots, x_n]$, the diagonal action can be made to correspond to the standard embedding of $T^n$.

Torus actions provide a rather powerful combinatorial tool in the study of automorphisms of polynomial automorphism groups,
as will be seen later in the course of this paper. Also, as many of the problems discussed here admit analogues in the case of the
free associative algebra (cf. \cite{KBYu} for details), the tools provided in the commutative polynomial case by the
Bia\l{}ynicki-Birula had to be adapted to the noncommutative case. This has been done in \cite{TA1} in a precise analogy with
the commutative case.

\subsection{Action of $\Phi$ on elementary automorphisms -- the approach of Rips}

The main objective of this paper is the proof of the following result, originally formulated by Rips, for the case of base field of
positive characteristic not equal to $2$.
\begin{theorem}[Rips]\label{thmripsmain}
Let $n>3$. Suppose $$\Phi:\Aut_0 K[x_1, \ldots, x_n]\rightarrow\Aut_0 K[x_1, \ldots, x_n]$$ preserves point-wise the
standard torus action $T^n\subset \Aut_0 K[x_1, \ldots, x_n]$. Then there is a linear automorphism $S\in \GL_n(K)$ such that
the composition of $\Phi$ with the adjoint action of $S$ preserves all elementary automorphisms.
\end{theorem}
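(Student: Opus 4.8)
The strategy is to leverage the fact that $\Phi$ fixes the standard torus $T^n$ pointwise and to track how $\Phi$ must act on the elementary automorphisms of the form $e_i^M:\ x_i\mapsto x_i + M$, where $M$ is a monomial not involving $x_i$. The first step is to record the commutation relations between $\sigma_\lambda \in T^n$ and such an elementary: if $M = \prod_{j\neq i} x_j^{a_j}$, then $\sigma_\lambda\circ e_i^M\circ\sigma_\lambda^{-1} = e_i^{cM}$ where $c = \lambda_i^{-1}\prod_{j}\lambda_j^{a_j}$. Because $\Phi$ fixes every $\sigma_\lambda$, the image $\Phi(e_i^M)$ must satisfy the identical family of conjugation relations. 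The key point is that, for $n>3$ and a generic weight, the common centralizer/normalizer data pins $\Phi(e_i^M)$ down to a very small set: it should again be an elementary of the same weight, i.e.\ $\Phi(e_i^M)$ acts on the torus-weight spaces exactly as $x_i\mapsto x_i + \gamma(M)\,M$ does for some scalar $\gamma(M)\in K^\times$, possibly after allowing permutations of coordinates with equal weight — but genericity of exponent vectors for $n>3$ rules out nontrivial permutations on the relevant generators. This is where positive characteristic forces care: the set of weights appearing among monomials of bounded degree can collapse modulo $p$, so one must choose the monomials $M$ used to rigidify $\Phi$ among those whose weight vectors remain distinct mod $p$, which is possible precisely because $n>3$ gives enough room.

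Next I would treat the linear part. Restricting $\Phi$ to $\GL_n(K)$ gives a group homomorphism $\GL_n(K)\to\Aut_0$ that fixes the diagonal torus pointwise; combined with the Bia{\l}ynicki-Birula rigidity and the standard structure of $\GL_n$ (generated by the torus together with the root subgroups, which are themselves elementary automorphisms $x_i\mapsto x_i + \mu x_j$), one deduces that $\Phi|_{\GL_n(K)}$ is, up to an inner automorphism by some $S\in\GL_n(K)$, the identity. Here one uses that an abstract endomorphism of $\GL_n$ over an algebraically closed field that is the identity on the maximal torus must be conjugation by a torus-normalizing element, and the Weyl-group ambiguity is killed by the normalization imposed by how $\Phi$ already acts on the weight-one elementaries. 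After composing $\Phi$ with $\Ad(S^{-1})$ we may therefore assume $\Phi$ is the identity on all of $\GL_n(K)$; it remains to show this forces $\Phi$ to fix every elementary automorphism.

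For the final step — propagating from $\GL_n$-triviality to triviality on all elementaries — the idea is an induction on the degree of the monomial $M$. Suppose $\Phi(e_i^M) = e_i^{\gamma(M)M}$ with $\gamma(M)\in K^\times$ as established above. Write $M$ nontrivially as $M = x_j\cdot M'$ with $M'$ still independent of $x_i$ (possible once $\deg M\ge 2$ and $n>3$ so that we can pick $j\neq i$ appropriately, and rearrange via the now-trivial linear action). The elementary $e_i^M$ can be produced from lower-degree elementaries and linear maps through a commutator identity of the type $[\,e_i^{x_k M''},\, e_k^{x_j}\,]$ (a nested-commutator expression realizing the monomial multiplication), and since $\Phi$ is a group homomorphism fixing all the linear ingredients and, inductively, the lower-degree elementaries exactly (not merely up to scalar, once the base case is nailed), the scalar $\gamma(M)$ is forced to equal $1$. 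Propagating the monomial-by-monomial statement and using the decomposition of an arbitrary elementary into a product of commuting monomial elementaries (the "elementary and useful property" noted after Definition~\ref{defelementaryaut}) then gives that $\Phi\circ\Ad(S)$ fixes every elementary automorphism, completing the proof.

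\medskip

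\textbf{Main obstacle.} The delicate point, and the one I expect to consume most of the work, is the rigidity step for the weight-one and weight-two elementaries in positive characteristic: showing that the torus-conjugation relations, together with the group relations among elementaries, genuinely force $\Phi(e_i^M)$ to be elementary of the same shape with a \emph{nonzero} scalar, rather than something exotic that merely has the right torus weights (for instance, an elementary $x_i\mapsto x_i + \gamma M + (\text{higher-degree terms of the same weight})$, which in characteristic $p$ can occur because distinct monomials may share a weight vector mod $p$). Controlling these higher-order corrections — ruling them out or absorbing them — is exactly the "complication related to the structure of spaces of homogeneous polynomials" flagged in the introduction, and handling it for $n>3$ via the extra combinatorial freedom is the crux of the argument.
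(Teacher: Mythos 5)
Your overall skeleton does match the paper's: use torus conjugation to show that $\Phi$ multiplies each monomial elementary $x_i\mapsto x_i+M$ by a scalar $\gamma(M)$ depending only on the exponent vector, normalize the linear part by an inner automorphism, and then argue $\gamma(M)=1$ by producing higher-degree elementaries from lower-degree ones inside identities that $\Phi$ must respect. Two issues, one minor and one fatal. The minor one: you have misdiagnosed where characteristic $p$ bites. Torus weights do not collapse: over an infinite (here algebraically closed) field the character $\lambda\mapsto\lambda_i^{-1}\prod_j\lambda_j^{a_j}$ determines the exponent vector, so the analogue of Proposition \ref{propphiaction} needs no extra care and your worry about "higher-degree terms of the same weight" is unfounded. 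Also, the claim that an abstract automorphism of $\GL_n(K)$ fixing the diagonal torus pointwise is inner is true but not elementary: one must exclude field automorphisms, the radial automorphisms $A\mapsto\chi(A)A$, and transpose-inverse, which the paper does by invoking the McDonald--Waterhouse structure theorem (Theorem \ref{thmmcdwat}); an appeal to "root subgroups plus the Weyl group" does not by itself rule these out.

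The genuine gap is in your final induction. Conjugating $x_i\mapsto x_i+x_kM''$ by a transvection $x_k\mapsto x_k+x_j$ only substitutes one variable for another; it yields the consistency relation $\gamma(x_jM'')=\gamma(x_kM'')$ between monomials of the \emph{same} degree and never pins the scalar to $1$. To genuinely multiply monomials one must conjugate a nonlinear elementary by another nonlinear elementary --- e.g. $x_1\mapsto x_1+x_2^{k+1}$ by $x_2\mapsto x_2+M'$ --- and the target monomial $x_2^kM'$ then appears in the expansion of $(x_2+M')^{k+1}$ with coefficient $k+1$. When $p\mid k+1$ this coefficient vanishes and the step produces nothing. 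This is precisely the obstruction the paper is organized around: the dichotomy between "good" monomials (some exponent $k$ with $p\nmid k+1$), handled first for monomials in $x_3,\ldots,x_n$ by induction on the number of variables via the composite $\alpha^{-1}\theta^{-1}\psi\theta\alpha$ and then in general via the binomial argument above, and "bad" monomials (all exponents $\equiv-1\bmod p$), which need a separate trick --- conjugation by a triangular matrix $x_2\mapsto x_2+\lambda x_3$, so that $(x_2+\lambda x_3)^{i_2}$ contributes a good monomial with coefficient $\lambda i_2\equiv-\lambda\not\equiv 0\pmod p$ (here $p\neq 2$ is used), followed by a comparison of coefficients. Your proposal never confronts this dichotomy, so as written the scalar $\gamma(M)$ is never shown to equal $1$ for bad $M$, and the induction step fails even for good $M$ whenever the chosen factorization hits a divisible exponent.
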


The immediate corollary of this theorem is the point-wise preservation of all tame automorphisms under the composition of
$\Phi$ with an inner automorphism of $\TAut_0$.
\begin{corollary}
Let $\Phi$ satisfy the conditions of Theorem \ref{thmripsmain}. Then there is a linear automorphism $S\in \GL_n(K)$ such that
the composition of $\Phi$ with the adjoint action of $S$ preserves all tame automorphisms.
\end{corollary}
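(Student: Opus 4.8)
The plan is to obtain the Corollary as an essentially formal consequence of Theorem \ref{thmripsmain}; the only genuine content is a short structural argument that upgrades ``fixes every elementary automorphism'' to ``fixes every linear automorphism''.

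First I would fix the linear automorphism $S\in\GL_n(K)$ furnished by Theorem \ref{thmripsmain} and let $\Phi'=\Ad(S)\circ\Phi$, the composition of $\Phi$ with the adjoint action of $S$. Then $\Phi'$ is again a group automorphism of $\Aut_0 K[x_1,\ldots,x_n]$, and by the Theorem it fixes every (origin-preserving) elementary automorphism. Since $\TAut_0 K[x_1,\ldots,x_n]$ is by definition generated by $\GL_n(K)$ together with the elementary automorphisms, and since a group homomorphism that fixes each element of a generating set fixes the whole subgroup it generates, it suffices to check that $\Phi'$ is the identity on $\GL_n(K)$.

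For this I would argue in two steps. The linear elementary automorphisms $x_i\mapsto x_i+c\,x_j$ with $i\neq j$ and $c\in K$ are a special case of Definition \ref{defelementaryaut} (the polynomial $P=c\,x_j$ does not involve $x_i$), so they are among the automorphisms fixed by $\Phi'$; as these elementary matrices generate $\SL_n(K)$ and $\Phi'$ is a homomorphism, $\Phi'$ restricts to the identity on $\SL_n(K)$. Separately, a scalar matrix $\lambda\cdot\Id$ lies on the standard torus, hence is fixed by $\Phi$, and is central in $\GL_n(K)$, hence unaffected by $\Ad(S)$; therefore $\Phi'$ fixes every scalar matrix. Because $K$ is algebraically closed, every $\lambda\in K^{\times}$ has an $n$-th root, so every $h\in\GL_n(K)$ factors as $h=(\mu\cdot\Id)\,g$ with $\mu\cdot\Id$ scalar and $g\in\SL_n(K)$; applying $\Phi'$ gives $\Phi'(h)=(\mu\cdot\Id)\,g=h$. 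Hence $\Phi'$ is the identity on $\GL_n(K)$, and combined with the fact that it fixes all elementary automorphisms it is the identity on all of $\TAut_0 K[x_1,\ldots,x_n]$, which is exactly the assertion.

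The only point that requires care — and the closest thing to an obstacle here — is that one must not conflate $\Phi$ with $\Phi'$: it is $\Phi$, not $\Phi'=\Ad(S)\circ\Phi$, that fixes the standard torus pointwise, and $\Ad(S)$ will in general move the torus. That is precisely why the behaviour of $\Phi'$ on $\GL_n(K)$ has to be reconstructed from its behaviour on the linear elementaries (yielding $\SL_n(K)$) together with its behaviour on the centre (yielding the determinant direction, via centrality plus algebraic closedness), rather than simply read off from a torus-preservation statement. Everything else — that elementary matrices generate $\SL_n(K)$, that $\GL_n(K)$ and the elementaries generate $\TAut_0$, and the $n$-th root splitting of $\GL_n$ — is routine.
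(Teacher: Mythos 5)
Your proof is correct, and its skeleton --- $\TAut_0 K[x_1,\ldots,x_n]$ is generated by $\GL_n(K)$ together with the elementary automorphisms, and a group homomorphism fixing a generating set pointwise fixes the whole subgroup it generates --- is exactly the argument the paper has in mind when it calls this an ``immediate corollary.'' The one place you genuinely diverge is in how the linear automorphisms are handled. The paper's construction of $S$ (Theorems \ref{thmlinearstab} and \ref{thmstabquadrmain}) already yields that $\Phi_S=\Ad(S)\circ\Phi$ fixes $\GL_n(K)$ pointwise, so the paper would simply cite that fact from inside the proof of Theorem \ref{thmripsmain}. You instead re-derive it from the theorem's \emph{statement} alone: the transvections $x_i\mapsto x_i+c\,x_j$ are elementary automorphisms in the sense of Definition \ref{defelementaryaut}, hence fixed, and they generate $\SL_n(K)$; scalar matrices lie on the standard torus and are central in $\GL_n(K)$, hence fixed by $\Phi$ and untouched by $\Ad(S)$ (note $S\in\GL_n(K)$, so centrality in $\GL_n(K)$ suffices); algebraic closedness then splits any $h\in\GL_n(K)$ as a scalar matrix times an element of $\SL_n(K)$. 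This is a valid and slightly more self-contained route --- it makes the corollary a formal consequence of the statement of Theorem \ref{thmripsmain} rather than of its proof --- at the modest cost of invoking the existence of $n$-th roots in $K$, which the paper's standing hypotheses supply anyway. Your cautionary remark that $\Ad(S)$ may move the torus, so that the behaviour of $\Phi'$ on $\GL_n(K)$ cannot simply be read off from torus preservation, is well taken and is correctly resolved via the centrality of the scalars.
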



In the case of base field of characteristic zero, Theorem \ref{thmripsmain} has been established in \cite{KBYu} (for the case of
$n>3$ generators discussed here as well as for the case $n=3$). The proof relies on the following lemma, also originally due to
Rips.

\begin{lemma}[Rips]  \label{LmRips}
Let $\Ch K = 0$. Then linear automorphisms, together with the automorphism
$$
\psi: x_1\to x_1 + x_2x_3,\;\;x_2\to x_2,\;\;\ldots,\;\; x_n\to x_n
$$
generate the entire (origin-preserving) tame automorphism group of $K[x_1,\ldots,x_n]$, when $n>3$.
\end{lemma}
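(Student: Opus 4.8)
The plan is to show that the single quadratic elementary automorphism $\psi: x_1 \to x_1 + x_2x_3$, together with $\GL_n(K)$, already generates every elementary automorphism $x_i \to x_i + P(x_1,\ldots,\hat x_i,\ldots,x_n)$; since elementaries and linear automorphisms generate $\TAut_0$ by definition, this suffices. By the decomposition-into-monomials remark in the preliminaries, it is enough to generate each \emph{monomial} elementary $x_1 \to x_1 + c\, x_{i_1}^{a_1}\cdots x_{i_k}^{a_k}$ (with $i_j \neq 1$). Conjugating $\psi$ by permutation-and-scaling matrices in $\GL_n(K)$ immediately yields all automorphisms $x_i \to x_i + c\, x_j x_k$ with $i,j,k$ distinct, so the real content is a bootstrap from degree-$2$ "transvection-type" elementaries to all higher-degree monomials. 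Since $n > 3$, at every stage I have a spare variable not appearing in the monomial being built, which is the source of the needed room.

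First I would record the basic commutator identity: if $\alpha: x_i \to x_i + A$ and $\beta: x_j \to x_j + B$ are monomial elementaries with $i \neq j$, $B$ not involving $x_i$, and $A$ involving $x_j$ (say $A = x_j C$ with $C$ a monomial not involving $x_i$, $x_j$), then the commutator $[\alpha,\beta] = \alpha\beta\alpha^{-1}\beta^{-1}$ is, up to a correction by lower terms, the elementary $x_i \to x_i + (\text{stuff})\cdot B C$ — i.e., commutators let me multiply the "payload" monomials. The standard way to make this exact is to choose $\beta$ so that $\alpha\beta\alpha^{-1}$ is again elementary; this works cleanly when $B$ is linear, $B = x_k$. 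So the key reduction is: from elementaries with payload $M$ and payload $x_k$ (one of them linear in a fresh variable), produce an elementary with payload $x_k M$, again modulo elementaries of strictly smaller degree which are handled by induction. Iterating, I get every monomial payload in variables $x_{j}$ with $j\neq i$ from the quadratic ones, hence all of $\TAut_0$.

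Concretely the induction is on the degree $d$ of the monomial $M$ appearing in the target elementary $\tau_M: x_1 \to x_1 + M$. The base case $d \le 2$: degree $1$ is linear, degree $2$ of the form $x_jx_k$ ($j\neq k$) is $\GL_n$-conjugate to $\psi$, and degree $2$ of the form $x_j^2$ is obtained from $x_jx_k$-type elementaries by a linear substitution $x_k \mapsto x_k + x_j$ followed by a correction (here characteristic $\neq 2$ is used, to invert the coefficient $2$ produced by the identity $(x_j+x_k)x_j = x_j^2 + x_jx_k$, or rather by the polarization $x_jx_k = \frac12((x_j+x_k)^2 - x_j^2 - x_k^2)$, which also shows the $x_j^2$-type and $x_jx_k$-type quadratics generate each other modulo $\GL_n$). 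Inductive step: write $M = x_k M'$ with $\deg M' = d-1 \ge 2$ and $x_k \neq x_1$; choose an auxiliary index $m \notin \{1,k\}\cup\{\text{indices in }M'\}$, available since $n>3$ — more precisely, since the number of distinct variables in $M$ is at most $n-1$ and we only need one index outside $\{1\}\cup\operatorname{supp}(M)$ or we reuse indices carefully; then form $[\tau'_1, \epsilon]$ where $\tau'_1: x_1 \to x_1 + x_m M'$ (available by induction, as $x_m M'$ has degree $d$ but strictly fewer distinct structure... ) — here I must be careful and instead organize the induction on the pair (degree, something) or directly on degree with the commutator $[\,x_1\to x_1 + x_m M',\ x_m \to x_m + x_k\,]$, which yields $x_1 \to x_1 + x_k M'$ up to sign and up to an elementary of degree $\le \deg(M') < d$.

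The main obstacle I expect is exactly this bookkeeping in the inductive step: making the commutator computation \emph{exact} (the naive commutator of two elementaries is not elementary, only a tame automorphism whose "leading" elementary part is what I want), and simultaneously ensuring I never run out of auxiliary variables. The cleanest fix is the classical trick of conjugating one elementary by the other so that the conjugate is again elementary: if $\epsilon: x_m \to x_m + x_k$ and $\tau: x_1 \to x_1 + x_m M'$ with $M'$ not involving $x_1$ or $x_m$, then $\epsilon \tau \epsilon^{-1}: x_1 \to x_1 + (x_m - x_k)M' = x_1 + x_m M' - x_k M'$, which \emph{is} elementary (it fixes all $x_i$, $i\neq 1$, and $M'$ is independent of $x_1$); composing with $\tau^{-1}$ gives precisely $x_1 \to x_1 - x_k M'$, an exact identity, no error terms at all. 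Scaling by $-1 \in \GL_n$ finishes it. Thus the genuine constraint is only that we can pick $m \notin \{1\} \cup \operatorname{supp}(M')$; since $\operatorname{supp}(M') \subseteq \{2,\ldots,n\}$ has size $\le n-1$ and could be as large as $n-1$, when $M'$ already uses all of $x_2,\ldots,x_n$ we instead peel off a \emph{different} factor of $M$ or raise the power of an existing variable — in all cases $n\ge 4$ guarantees enough slack, and writing $M = x_k^{a}\,N$ with $x_k$ of maximal exponent and routing through $x_m$ for an index $m$ \emph{already} in $\operatorname{supp}(N)$ (legal, since the identity above only needs $M'$ independent of $x_1$ and $x_m$, forcing $m\notin\operatorname{supp}(M')$, so one does need $m$ outside) shows the only real case to check is $|\operatorname{supp}(M)| = n-1$, which is handled by first producing $x_1 \to x_1 + x_m M'$ with $m$ a repeated variable's slot via the degree-$2$ polarization trick applied inside the recursion. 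I would present the induction so that this edge case is isolated and dispatched once.
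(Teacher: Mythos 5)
Your argument is correct in substance, and it takes a genuinely different route from the one the paper relies on. The paper does not actually reprove Lemma \ref{LmRips} here: it cites the proof from \cite{KBYu}, whose strategy is to first generate all elementaries $x_1\to x_1+bx_j^m$ (this is Lemma \ref{LmR1}, reproduced in the paper), conjugate by $\GL_n(K)$ to obtain $x_1\to x_1+b\ell^m$ for every linear form $\ell$ in $x_2,\dots,x_n$, and then invoke the fact that in characteristic zero the $m$-th powers of linear forms span the whole space of degree-$m$ forms in those variables; summing commuting elementary payloads on $x_1$ then gives every elementary. Your bootstrap is different: the exact identity $(\epsilon\tau\epsilon^{-1})\tau^{-1}\colon x_1\to x_1\pm BM''$, for $\epsilon\colon x_m\to x_m+B$ and $\tau\colon x_1\to x_1+x_mM''$ with $B,M''$ free of $x_1$ and $x_m$, lets you multiply payloads with no error terms; splitting $M=BM''$ with $\deg B=2$ makes both ingredients of degree $\le d-1$, so the induction on degree closes \emph{provided} an index $m\notin\{1\}\cup\operatorname{supp}(M)$ exists. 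This part of your argument is characteristic-free, which is a real advantage --- it isolates exactly where $\Ch K=0$ is used --- and it is close in spirit to the conjugation computations the paper itself performs in Theorem \ref{thmgoodinduction}.

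The one step you leave under-specified is the full-support case $\operatorname{supp}(M)=\{2,\dots,n\}$, and that is precisely where the hypothesis $\Ch K=0$ enters, so it must be written out. Write $M=x_2^{a_2}x_3^{a_3}R$ with $R=x_4^{a_4}\cdots x_n^{a_n}$ and set $s=a_2+a_3$. The monomial $x_2^sR$ has support of size at most $n-2$, so the elementary $x_1\to x_1+cx_2^sR$ is already available from your fresh-variable step (order the degree-$d$ stage so that small-support monomials are treated first). Conjugating it by the linear map $x_2\to x_2+tx_3$ yields $x_1\to x_1+c(x_2+tx_3)^sR$, and since payloads on $x_1$ add, you obtain $x_1\to x_1+P$ for every $P$ in the span of $\{(x_2+tx_3)^sR : t\in K\}$. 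A Vandermonde argument ($K$ is infinite, being of characteristic zero) shows this span contains $\binom{s}{a_3}x_2^{a_2}x_3^{a_3}R$, and $\binom{s}{a_3}\neq 0$ because $\Ch K=0$. This binomial-coefficient obstruction is exactly what makes the positive-characteristic version (Conjecture \ref{conjrips}) open. Your division by $2$ in the base case, by contrast, is not actually needed: $x_1\to x_1+x_j^2$ falls out exactly from conjugating $x_1\to x_1+x_jx_k$ by $x_k\to x_k+x_j$ and cancelling the $x_jx_k$ term.
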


This result is stated in \cite{KBYu} as Lemma 4.12 -- for the case $\Ch K = 0$ as well as $\Ch K = p\neq 2$. The proof given in
the initial paper, however, works only in the characteristic zero case, while the positive characteristic case contains a subtlety
contributing to the breakdown of the final stage of the combinatorial argument. Therefore, the positive characteristic version of
the lemma of Rips needs to be reformulated as a conjecture:

\begin{conjecture}[Rips]\label{conjrips}
Let $K$ be an infinite base field with $\Ch K = p\neq 2$. Then linear automorphisms, together with the automorphism
$$
\psi: x_1\to x_1 + x_2x_3,\;\;x_2\to x_2,\;\;\ldots,\;\; x_n\to x_n
$$
generate the entire tame automorphism group of $K[x_1,\ldots,x_n]$, when $n>3$.
\end{conjecture}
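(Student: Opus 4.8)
To prove Conjecture~\ref{conjrips}, the natural plan is to show, by induction on total degree, that every origin-preserving elementary automorphism lies in $G:=\langle\GL_n(K),\psi\rangle$. Writing an elementary $x_i\mapsto x_i+P$ as the product of the pairwise-commuting elementaries $x_i\mapsto x_i+M$ over the monomials $M$ of $P$, and conjugating by permutation matrices to move the modified coordinate to any slot, it suffices to put $x_1\mapsto x_1+M$ in $G$ for every monomial $M\in K[x_2,\dots,x_n]$ of degree $\ge 2$. I would first settle the quadratic case: the conjugates of $\psi$ by $\GL_n(K)$ are exactly the maps $x\mapsto x+\ell_1(x)\ell_2(x)\,w$ with $\ell_1,\ell_2$ linearly independent linear forms and $w\ne 0$ satisfying $\ell_1(w)=\ell_2(w)=0$; taking $w=e_1$, composing such maps (they commute) and rescaling one $\ell_i$ gives $x_1\mapsto x_1+Q$ for every $Q$ in the span of products of two independent linear forms in $x_2,\dots,x_n$, and since $p\ne 2$ and $n\ge 4$ this span is all of the quadratic forms in $x_2,\dots,x_n$.

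Next I would establish a \emph{multiplication lemma}: conjugating a permutation-conjugate $x_b\mapsto x_b+x_ax_c$ of $\psi$ (with $a,b,c$ distinct) by an elementary $x_a\mapsto x_a+f$ with $f$ free of $x_a$ and $x_b$, and then composing with $(x_b\mapsto x_b+x_ax_c)^{-1}$, produces the elementary $x_b\mapsto x_b+x_cf$. Given a target $x_1\mapsto x_1+M$ of degree $d\ge 3$ with $M$ omitting at least one of $x_2,\dots,x_n$, one writes $M=x_c\cdot(M/x_c)$ for a variable $x_c\mid M$ and picks $x_a$ ($a\ne 1,c$) absent from $M$; then $M/x_c$ has degree $d-1$ and omits $x_a$ and $x_1$, so $x_a\mapsto x_a+M/x_c$ is in $G$ by the inductive hypothesis (applied after the transposition $(1\,a)$), and the multiplication lemma finishes. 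Descending to the quadratic base case, this puts $x_1\mapsto x_1+M$ in $G$ for every $M$ omitting some variable — in particular for every $M$ of degree $<n-1$.

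The remaining targets are $x_1\mapsto x_1+M$ with $M$ divisible by all of $x_2,\dots,x_n$, and this is where I expect the real obstacle to lie. In characteristic zero one conjugates an already-available elementary such as $x_1\mapsto x_1+x_2^{d}$ by a generic linear map $x_2\mapsto x_2+\sum_{i\ge 3}t_ix_i$; the target monomial appears in $(x_2+\sum_{i\ge3} t_ix_i)^{d}$ with a nonzero multinomial coefficient, every other monomial of degree $d$ occurring there is already in $G$ (it either omits a variable or is smaller in a suitable ordering of exponent vectors), and varying the $t_i$ over the infinite field isolates the target by a Vandermonde argument. In characteristic $p$ this collapses: the multinomial coefficients can vanish mod $p$ — already $(x_2+\sum_{i\ge3} t_ix_i)^p=x_2^p+\sum_{i\ge3} t_i^px_i^p$ has no mixed terms at all — and, by Kummer's theorem, "Frobenius-balanced" monomials such as $x_2^2x_3^2x_4$ (for $n=4$, $p=3$) simply fail to occur in any of the expansions produced by the single-commutator constructions.

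Structurally, one checks that $W:=\{P:(x_1\mapsto x_1+P)\in G\}$ is a graded $\GL_{n-1}(K)$-stable subspace of $K[x_2,\dots,x_n]$, and the argument above shows that its degree-$d$ component equals the full space $S^d$ of degree-$d$ forms for all $d\le(n-2)(p-1)$. For larger $d$, however, $S^d$ is a \emph{reducible} $\GL_{n-1}(K)$-module, the $d$-th powers of linear forms (which is essentially what the easy constructions produce) span only a proper submodule, and whether the commutator operations available enlarge $W$ across the remaining submodules of every $S^d$ is precisely the content of the conjecture. A proof would presumably require a genuinely new family of relations — combining several commutators simultaneously so as to dodge the vanishing coefficients, or exploiting the Frobenius/$p$-power structure directly — while still using essentially that $K$ is infinite (so that the genericity arguments survive even when individual coefficients vanish). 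It is also conceivable that the persistent vanishing is symptomatic: in positive characteristic the generating set may have to be enlarged, e.g. by adjoining a map such as $x_1\mapsto x_1+x_2^p$, in which case the conjecture would need to be reformulated accordingly.
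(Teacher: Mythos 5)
The statement you were asked to prove is not proved in the paper: it is stated there explicitly as a \emph{conjecture} (Conjecture \ref{conjrips}), and the authors say in so many words that the characteristic-zero proof of the corresponding lemma in \cite{KBYu} breaks down at its final stage in positive characteristic and that they ``do not possess compelling evidence either in favor or against'' the statement. So there is no proof of record to compare yours with, and your proposal --- which candidly stops short of a proof --- cannot be faulted for failing to close a gap that the authors themselves leave open. The genuine gap you name (monomials divisible by all of $x_2,\dots,x_n$ whose exponent patterns force carries in base $p$, so that the relevant multinomial coefficients vanish and the Vandermonde/genericity argument collapses) is, as far as one can tell from the paper, exactly the ``subtlety contributing to the breakdown of the final stage of the combinatorial argument'' that caused the demotion from lemma to conjecture. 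Note that the paper's own good/bad dichotomy on exponents $\equiv -1 \bmod p$ appears only in its workaround for Theorem \ref{thmripsmain}, which is engineered precisely to avoid needing the Rips conjecture, not to prove it.

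Your partial results are sound and track the machinery the paper does have: the quadratic case via $\GL_n(K)$-conjugates of $\psi$, your multiplication lemma (conjugating $x_b\mapsto x_b+x_ax_c$ by $x_a\mapsto x_a+f$ to manufacture $x_b\mapsto x_b+x_cf$), and the induction over monomials omitting a variable are the same identities that drive Lemma \ref{LmR1}, Lemma \ref{lemsubgroupcontents} and the commutator computation inside the proof of Theorem \ref{thmgoodinduction} (there exploited for the action of $\Phi$ rather than for generation). Two corrections to the final discussion. First, the threshold $d\le(n-2)(p-1)$ for $W_d=S^d$ does not follow from the constructions you describe; what follows is all $d$ for monomials omitting a variable, all $d\le p-1$ unconditionally, and whatever further monomials can be reached by conjugating \emph{other} already-available degree-$d$ elementaries (e.g.\ $x_2^2x_3x_4$ for $n=4$, $p=3$ is obtained from $x_1\mapsto x_1+x_2^2x_3^2$ via $x_3\mapsto x_3+tx_4$, even though $\binom{4}{2,1,1}\equiv 0$). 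Second, enlarging the generating set by $x_1\mapsto x_1+x_2^p$ would change nothing: that map already lies in $G$ by Lemma \ref{LmR1} (equivalently by your own multiplication lemma applied repeatedly), so any reformulation of the conjecture would have to adjoin something genuinely outside the subgroup currently generated. With those caveats, your assessment --- real partial progress, a precisely located obstruction, and an open problem --- coincides with the paper's.
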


This statement, to which we will refer as the \textbf{Rips conjecture} in this paper, is rather interesting. At present, we do not
possess compelling evidence either in favor or against the statement of this conjecture, which makes the investigation of this
problem into an intriguing and worthwhile endeavor.

In order to rectify the proof of the positive characteristic case of Theorem \ref{thmripsmain} in a way that circumvents the yet to
be discovered nature of the generators of the tame automorphism subgroup, we proceed according to the following plan:

1. Observe that the preservation of the maximal torus by $\Phi$ implies, up to a linear inner automorphism, preservation of
$\GL_n(K)$ as well as
$$
\psi: x_1\to x_1 + x_2x_3,\;\;x_2\to x_2,\;\;\ldots,\;\; x_n\to x_n.
$$

2. Establish that on elementary automorphisms of the form
$$
x_j\to x_j +M,
$$
where $M$ is a monomial of degree $\geq 2$, the action of $\Phi$ yields
$$
x_j\to x_j + aM
$$
where $a\in K^{\times}$ depends only on the $K$-submodule generated by $M$.

3. Demonstrate that $a=1$ always holds. To that end, distinguish the following cases:

   -- Case A (the "\textbf{good}" case). The monomial
   $$
M = x_1^{k_1}\ldots x_n^{k_n}
   $$
   contains an $x_i$ with $k_i+1$ a multiple of $p$.

   -- Case B (the "\textbf{bad}" case). All powers $k_i$ contributing to $M$ are such that $k_i+1$ is divisible by $p$.

In the last step of this plan, the first case is processed in two steps by induction on the number of variables in $M$; while the
second case essentially relies on the first one.

\subsection{Action of $\Phi$ on $\GL_n(K)$ and $\psi$}

In this subsection, we perform the first step of the outlined plan of the proof; starting with the assumption that
$$
\Phi: \Aut_0 K[x_1,\ldots,x_n]\rightarrow \Aut_0 K[x_1,\ldots,x_n]
$$
preserves points of the maximal torus $T^n$, we show that the composition of $\Phi$ with a linear inner automorphism preserves
point-wise the subgroup $\GL_n(K)$ of linear automorphisms together with the quadratic elementary automorphism
$$
\psi: x_1\to x_1+x_2x_3.
$$
This result was in fact established in the initial paper \cite{KBYu}; we reproduce the proof here for the sake of completeness. In
actuality, we will be primarily concerned with verifying the statement regarding the quadratic automorphism $\psi$, as the
theorem stating that the automorphism
$$
\Phi: \GL_n(K)\rightarrow \GL_n(K)
$$
preserving every diagonal matrix is inner is a fairly classical result. Thus the focus of this subsection lies in showing that an
element $\Phi\in \Aut \Aut_0 K[x_1,\ldots, x_n]$ which preserves every point of $\GL_n(K)$ can be composed with a linear
inner automorphism so that the resulting mapping $\tilde{\Phi}$ preserves the quadratic automorphism $\psi$ as well as every
linear automorphism. We will also comment on how to obtain the stabilization of $\GL_n(K)$ from stabilization of the maximal
torus.

Firstly, the following lemma is established by a trivial, direct computation.

\begin{lemma}  \label{LmTorComp}
Consider the linear automorphisms: $\alpha: x_i\to \alpha_i x_i$, $\beta: x_i\to \beta_i x_i$. Let $$\varphi: x_i\to \sum_{i,J}
a_{iJ}x_J,\; i=1,\dots,n$$ be a general polynomial automorphism, written explicitly, where $J=(j_1,\dots,j_n)$ is the
multi-index: $x_J=x_{j_1}\cdots x_{j_n}$. Then

$$\alpha\circ\varphi\circ\beta: x_i\to \sum_{i,J}\beta_i \alpha_J
a_{iJ}x_J.$$

In particular,
$$\alpha\circ\varphi\circ\alpha^{-1}: x_i\to \sum_{i,J} \alpha_i^{-1}\alpha_{J}
a_{iJ}x_J.$$
\end{lemma}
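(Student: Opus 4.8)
The plan is to unwind the composition $\alpha\circ\varphi\circ\beta$ one map at a time, using nothing beyond the facts that $\alpha,\beta$ are $K$-algebra homomorphisms acting diagonally on the generators and that $\varphi$ is a $K$-algebra homomorphism. First I would record the one elementary observation on which everything rests: since $\alpha(x_k)=\alpha_k x_k$ and $\alpha$ is multiplicative, for any monomial $x_J=x_{j_1}\cdots x_{j_n}$ we get $\alpha(x_J)=\alpha(x_{j_1})\cdots\alpha(x_{j_n})=\alpha_J\, x_J$, where $\alpha_J:=\alpha_{j_1}\cdots\alpha_{j_n}$ (equivalently, if $J$ is read as an exponent vector, $\alpha_J=\prod_k\alpha_k^{j_k}$). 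The same identity holds for $\beta$. Thus a diagonal linear automorphism simply rescales each monomial $x_J$ by the single scalar $\alpha_J$.

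Next I would compute $(\alpha\circ\varphi\circ\beta)(x_i)$ in three steps, using the convention $(\alpha\circ\varphi\circ\beta)(x_i)=\alpha(\varphi(\beta(x_i)))$. Applying $\beta$ first gives $\beta(x_i)=\beta_i x_i$. Since $\varphi$ is an algebra homomorphism, $\varphi(\beta_i x_i)=\beta_i\,\varphi(x_i)=\beta_i\sum_J a_{iJ}x_J$. Finally, applying $\alpha$ and invoking the observation above together with $K$-linearity of $\alpha$ yields $\alpha\bigl(\beta_i\sum_J a_{iJ}x_J\bigr)=\beta_i\sum_J a_{iJ}\alpha_J x_J=\sum_J \beta_i\alpha_J a_{iJ}\,x_J$, which is exactly the asserted formula.

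The "in particular" statement is then immediate: taking $\beta=\alpha^{-1}$, so that $\beta_i=\alpha_i^{-1}$, substitution into the formula just obtained gives $(\alpha\circ\varphi\circ\alpha^{-1})(x_i)=\sum_J \alpha_i^{-1}\alpha_J a_{iJ}\,x_J$.

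There is essentially no obstacle in this argument — as the authors note, it is a direct computation. The only two points deserving (minimal) attention are fixing the composition convention so that $\beta$ acts on the generators first and $\alpha$ last, and the remark that a diagonal linear automorphism acts on a monomial $x_J$ by the scalar $\alpha_J$ rather than in any more elaborate fashion. Both are routine, and once they are in place the three-step unwinding above is forced.
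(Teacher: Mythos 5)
Your computation is correct and is precisely the ``trivial, direct computation'' the paper invokes without writing out: the paper gives no proof body for this lemma, and your three-step unwinding (noting that a diagonal automorphism scales $x_J$ by $\alpha_J$, then applying $\beta$, $\varphi$, $\alpha$ in turn) reproduces the intended argument and the stated formulas exactly, including the specialization $\beta=\alpha^{-1}$.
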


Applying Lemma \ref{LmTorComp} and comparing the coefficients we get the following

\begin{lemma}  \label{LmLindiag}
Consider the diagonal $T^1$ action: $x_i\mapsto \lambda x_i$. Then the set of automorphisms commuting with this action is
exactly the set of linear automorphisms.
\end{lemma}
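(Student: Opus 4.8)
The plan is to feed the scalar automorphism directly into Lemma~\ref{LmTorComp}. Write a prospective commuting automorphism as $\varphi: x_i\to \sum_{i,J} a_{iJ}x_J$ and put $\alpha=\beta^{-1}=\sigma_\lambda$, the linear automorphism $x_i\mapsto \lambda x_i$ generating the diagonal $T^1$. For the scalar torus one has $\alpha_i=\lambda$ and $\alpha_J=\lambda^{d_J}$, where $d_J$ denotes the total degree of the monomial $x_J$, so the second formula of Lemma~\ref{LmTorComp} gives
$$
\sigma_\lambda\circ\varphi\circ\sigma_\lambda^{-1}: x_i\to \sum_{i,J}\lambda^{\,d_J-1}\,a_{iJ}\,x_J .
$$
Hence $\varphi$ commutes with $\sigma_\lambda$ for every $\lambda\in K^\times$ exactly when $(\lambda^{\,d_J-1}-1)\,a_{iJ}=0$ holds identically in $\lambda$ for each pair $(i,J)$.

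Next I would invoke the infiniteness of $K$: if $d_J\neq 1$, then $\lambda^{\,d_J-1}=1$ fails for all but finitely many $\lambda\in K^\times$ (the equation $\lambda^{\,|d_J-1|}=1$ has at most $|d_J-1|$ solutions), so picking such a $\lambda$ forces $a_{iJ}=0$. Therefore every monomial actually occurring in $\varphi(x_i)$ has degree one, i.e.\ $\varphi(x_i)=\sum_j a_{ij}x_j$ is linear; being an automorphism, the matrix $[a_{ij}]$ is invertible, so $\varphi\in\GL_n(K)$. The reverse inclusion is immediate: for a linear $\varphi$ all $d_J=1$, and the displayed identity reduces to $\sigma_\lambda\circ\varphi\circ\sigma_\lambda^{-1}=\varphi$. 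Thus the set of automorphisms commuting with the $T^1$-action is precisely $\GL_n(K)$.

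There is essentially no obstacle in this argument; the single point that deserves care is that it genuinely uses the \emph{entire} one-parameter subgroup rather than a lone ``generic'' element. Indeed, when $K=\overline{\mathbb{F}}_p$ every element of $K^\times$ is a root of unity, so for a fixed $\lambda$ of multiplicative order $d$ the monomials of degree $\equiv 1\pmod d$ would also be preserved by $\sigma_\lambda$; only by letting $\lambda$ range over all of $K^\times$ does one eliminate every monomial of degree $\neq 1$. This is exactly why the statement is formulated in terms of the $T^1$-action and why infiniteness of $K$ is the one hypothesis that is really consumed.
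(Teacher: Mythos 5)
Your proposal is correct and is exactly the argument the paper intends: the paper proves this lemma simply by "applying Lemma \ref{LmTorComp} and comparing the coefficients," which is precisely your computation of $\sigma_\lambda\circ\varphi\circ\sigma_\lambda^{-1}$ followed by the observation that, over an infinite field, $\lambda^{d_J-1}=1$ for all $\lambda$ forces $d_J=1$. Your closing remark about needing the whole $T^1$-action rather than a single $\lambda$ (because of roots of unity in $\overline{\mathbb{F}}_p$) is a worthwhile clarification, but it does not change the route.
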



The previous lemma allows one to prove the following proposition.
\begin{proposition}
Let $\Phi$ preserve point-wise the maximal torus. Then for all $\varphi\in \GL_n(K)$, $\Phi(\varphi) \in \GL_n(K)$, i.e. $\Phi$
maps linear automorphisms to linear automorphisms.
\end{proposition}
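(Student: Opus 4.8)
The plan is to deduce this from the fact that any group automorphism preserves centralizers, combined with Lemma~\ref{LmLindiag}. The key observation is that the scalar automorphisms form a distinguished one-parameter subgroup inside the standard torus which is central in $\GL_n(K)$, so its centralizer is a $\Phi$-invariant subgroup that Lemma~\ref{LmLindiag} pins down exactly.

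Concretely, I would proceed as follows. First, note that for every $\lambda\in K^{\times}$ the scalar automorphism $\delta_\lambda\colon x_i\mapsto \lambda x_i$ is the point $(\lambda,\dots,\lambda)$ of the standard torus $T^n\subset\Aut_0 K[x_1,\dots,x_n]$; since $\Phi$ fixes $T^n$ point-wise, $\Phi(\delta_\lambda)=\delta_\lambda$ for all $\lambda$. Second, let $\varphi\in\GL_n(K)$ be arbitrary; scalar matrices are central in $\GL_n(K)$, which one also reads off directly from the conjugation formula in Lemma~\ref{LmTorComp} (it yields $\delta_\lambda\circ\varphi\circ\delta_\lambda^{-1}=\varphi$), so $\varphi$ commutes with every $\delta_\lambda$. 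Third, applying the homomorphism $\Phi$ gives that $\Phi(\varphi)$ commutes with $\Phi(\delta_\lambda)=\delta_\lambda$ for every $\lambda\in K^{\times}$, i.e. $\Phi(\varphi)$ commutes with the entire diagonal $T^1$ action $x_i\mapsto\lambda x_i$. Fourth, Lemma~\ref{LmLindiag} identifies the set of automorphisms commuting with this $T^1$ action as precisely $\GL_n(K)$; hence $\Phi(\varphi)\in\GL_n(K)$.

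I do not expect a genuine obstacle here: once Lemma~\ref{LmLindiag} is available, the argument is purely formal and uses only that $\Phi$ preserves the scalar subgroup $\{\delta_\lambda\}$ — not the full maximal torus. The only thing to be careful about is invoking Lemma~\ref{LmLindiag} with its hypothesis correctly, namely commutation with the whole one-parameter family $\{\delta_\lambda : \lambda\in K^{\times}\}$ rather than with a single scalar map, which is automatic since all $\delta_\lambda$ are torus points and hence all fixed by $\Phi$. The real work is postponed to the later subsections, where one must upgrade this to point-wise preservation of $\GL_n(K)$ (after composing with a linear inner automorphism, using the classical rigidity of $\Aut\GL_n$) and to the analysis of $\Phi$ on $\psi$ and on the higher-degree elementary automorphisms.
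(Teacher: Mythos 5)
Your argument is correct and is essentially identical to the paper's own proof: the paper also writes $\varphi = h_{\lambda}\circ\varphi\circ h_{\lambda}^{-1}$ for the homothety $h_{\lambda}$ (your $\delta_{\lambda}$), applies $\Phi$ using that the homotheties lie in the fixed maximal torus, and concludes via Lemma~\ref{LmLindiag}. No discrepancy to report.
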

\begin{proof}
  If $\varphi$ is linear, then for any $\lambda \in K^{\times}$, we have
  $$
\varphi = h_{\lambda}\circ\varphi\circ h_{\lambda}^{-1}
 $$
  where $h$ is the homothety with ratio $\lambda$ corresponding to the diagonal action of $T^1\subset T^n$, i.e. a linear automorphism which maps every generator $x_i$ to $\lambda x_i$. Then
 $$
\Phi(\varphi) = \Phi( h_{\lambda}\circ\varphi\circ h_{\lambda}^{-1}) =  h_{\lambda}\circ\Phi(\varphi)\circ h_{\lambda}^{-1},
 $$
 where in the last equality we used that $\Phi$ acts as the identity map on the maximal torus (to which the homothety $h_{\lambda}$ obviously belongs).
 Therefore, $\Phi(\varphi)$ commutes with the diagonal action of $T^1$ and by Lemma \ref{LmLindiag} is linear.
\end{proof}



We are now in the situation where automorphism
$$
\Phi: \Aut_0 K[x_1,\ldots, x_n]\rightarrow  \Aut_0 K[x_1,\ldots, x_n]
$$
restricts to an automorphism
$$
\Phi: \GL_n(K)\rightarrow \GL_n(K)
$$
of the general linear group. The structure of $\Aut \GL_n(K)$ (for $n\geq 3$ and $K$ a field) is a well understood classical
subject, to which belongs the following theorem.

\begin{theorem}\label{thmlinearstab}
  Suppose $K$ is an infinite field (of arbitrary characteristic), $n\geq 3$ and the automorphism
  $$
\Phi: \GL_n(K)\rightarrow \GL_n(K)
$$
preserves every diagonal matrix. Then $\Phi$ is inner.
\end{theorem}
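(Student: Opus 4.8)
The plan is to reduce the statement to the description of automorphisms of $\SL_n(K)$ (or of $\GL_n(K)$ as an abstract group) that is classical and goes back to Schreier--van der Waerden and Dieudonn\'e: every abstract automorphism of $\GL_n(K)$ for $n\geq 3$ is, up to an inner automorphism and a twist by a field automorphism of $K$ and by a character $\GL_n(K)\to K^\times$ composed with the "contragredient" map $g\mapsto (g^{T})^{-1}$, the identity. The point of the hypothesis is to kill all these extra ingredients one at a time using that $\Phi$ fixes the diagonal torus pointwise. First I would record that $\Phi$ preserves the subgroup of diagonal matrices $D\cong (K^\times)^n$ and acts as the identity on it; since $D$ is a maximal torus (indeed a maximal abelian subgroup containing regular semisimple elements), $\Phi$ permutes the root subgroups relative to $D$. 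Concretely, for $i\neq j$ let $E_{ij}(t)$ denote the elementary transvection $x_i\mapsto x_i+t x_j$ (identity on the other generators); these generate $\SL_n(K)$, so it suffices to pin down $\Phi$ on each one-parameter subgroup $\{E_{ij}(t):t\in K\}$.

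Next I would use the conjugation relation $d\, E_{ij}(t)\, d^{-1}=E_{ij}(d_i d_j^{-1} t)$ for $d=\mathrm{diag}(d_1,\dots,d_n)\in D$. Applying $\Phi$ and using $\Phi(d)=d$, the image $\Phi(E_{ij}(t))$ must be conjugated by $d$ according to the \emph{same} character $d\mapsto d_i d_j^{-1}$ of $D$. For $n\geq 3$ these characters are pairwise distinct and distinct from the trivial one, and the centralizer of the corresponding weight space forces $\Phi(E_{ij}(t))$ to lie again in the root subgroup $\{E_{ij}(s)\}$ (this is where $n\geq 3$ is used: the weight $e_i-e_j$ determines the pair $(i,j)$ as an ordered pair, so there is no room to land in $E_{ji}$; the contragredient twist is thereby excluded). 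Hence $\Phi(E_{ij}(t))=E_{ij}(f_{ij}(t))$ for some bijection $f_{ij}:K\to K$ with $f_{ij}(0)=0$. Because $E_{ij}(t)E_{ij}(t')=E_{ij}(t+t')$, each $f_{ij}$ is an automorphism of the additive group $(K,+)$; and conjugating the Steinberg relation $[E_{ij}(t),E_{jk}(s)]=E_{ik}(ts)$ (valid for distinct $i,j,k$, which exist since $n\geq 3$) through $\Phi$ shows that the $f_{ij}$ are compatible and multiplicative: $f_{ik}(ts)=f_{ij}(t)f_{jk}(s)$. Together with additivity this makes $\sigma:=f_{ij}$ independent of $(i,j)$ and a \emph{field} automorphism of $K$, with $\Phi$ agreeing on $\SL_n(K)$ with the coordinate-wise application of $\sigma$ followed by an inner automorphism; but an inner automorphism that fixes $D$ pointwise is conjugation by an element of the normalizer $N(D)$ acting trivially on $D$, i.e. by a central element, hence is trivial on $\SL_n(K)$.

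It remains to eliminate the field automorphism $\sigma$ and to extend the conclusion from $\SL_n$ to all of $\GL_n$. For $\sigma$: the diagonal matrix $d=\mathrm{diag}(\lambda,1,\dots,1)$ is fixed by $\Phi$ for every $\lambda\in K^\times$, and conjugating $E_{12}(1)$ by $d$ gives $E_{12}(\lambda)$; applying $\Phi$ and using $\Phi(d)=d$ together with $\Phi(E_{12}(1))=E_{12}(\sigma(1))=E_{12}(1)$ yields $E_{12}(\lambda)=\Phi(E_{12}(\lambda))=E_{12}(\sigma(\lambda))$, so $\sigma=\mathrm{id}$. Thus $\Phi$ is the identity on $\SL_n(K)$. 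Finally, any $g\in\GL_n(K)$ can be written $g=d\cdot s$ with $s\in\SL_n(K)$ and $d=\mathrm{diag}(\det g,1,\dots,1)$; since $\Phi$ fixes $d$ and $s$, it fixes $g$. (Alternatively: $\Phi$ restricted to $\SL_n(K)$ is inner by what was shown, and one checks directly that the only automorphism of $\GL_n(K)$ restricting to the identity on $\SL_n(K)$ and fixing the diagonal torus is inner — in fact the identity, since $\GL_n(K)=\SL_n(K)\cdot D$.) Hence $\Phi$ is inner, as claimed.

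The main obstacle I anticipate is the careful bookkeeping in the middle step: showing that $\Phi$ sends each root subgroup $E_{ij}$ \emph{into itself} (not into $E_{ji}$ or some product) purely from the weight data, and then promoting the collection $\{f_{ij}\}$ to a single field automorphism via the Steinberg commutator relations. This is precisely the place where $n\geq 3$ is indispensable and where the contragredient twist would otherwise appear; it is essentially a repackaging of the classical proof of the automorphism theorem for $\GL_n$, so the work is organizational rather than conceptual.
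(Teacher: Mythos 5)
Your route is genuinely different from the paper's. The paper does not reprove the classification of $\Aut\GL_n(K)$: it invokes the McDonald--Waterhouse standard form $\Phi=\Omega\circ\Ad_S^{\theta}\circ P_{\chi}$ (Theorem \ref{thmmcdwat}, valid for $\Ch K\neq 2$), or alternatively Hahn--O'Meara, and then uses the diagonal hypothesis only to kill $\Omega$, $\theta$ and $\chi$. You instead rebuild the relevant part of that classification from scratch: pointwise fixing of the diagonal torus $D$ confines $\Phi(E_{ij}(t))$ to the root subgroup $U_{ij}$, the relation $dE_{ij}(t)d^{-1}=E_{ij}(d_id_j^{-1}t)$ then forces $f_{ij}(t)=c_{ij}t$ (so the field twist $\theta$ never even appears), the Steinberg relations give the cocycle identity $c_{ik}=c_{ij}c_{jk}$, hence $c_{ij}=b_ib_j^{-1}$ and $\Phi=\Ad_{\mathrm{diag}(b_i)}$ on $\SL_n(K)$, and $\GL_n(K)=D\cdot\SL_n(K)$ finishes the job. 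What this buys is a self-contained argument, uniform in the characteristic (including $2$, where the paper must fall back on \cite{HaOM}), and it is in fact exactly the ``purely combinatorial approach more in line with our standard techniques'' that the Discussion section of the paper leaves as an exercise.

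Two caveats. First, the pivotal step --- that $\Phi$ maps $U_{ij}$ \emph{into} $U_{ij}$ --- is where essentially all of the work lives, and you assert it from ``weight data'' without proof. It is true and provable along the lines you indicate: $\Phi(E_{ij}(t))$ centralizes the subtorus $\{d\in D: d_i=d_j\}$, whose centralizer (for $K$ infinite) is $\GL(\langle e_i,e_j\rangle)\times\prod_{k\neq i,j}K^{\times}$; the homomorphism property kills the $K^{\times}$-components and the determinant, and a direct computation with the pairwise-commuting $D$-conjugates of the resulting $2\times 2$ block forces it to be unipotent upper triangular, the lower-triangular (contragredient) option being excluded because no additive bijection of $K$ can intertwine multiplication by $\lambda$ with multiplication by $\lambda^{-1}$. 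This should be written out; as it stands it is the one real gap. Second, your claim that an inner automorphism fixing $D$ pointwise is conjugation by a central element, hence trivial on $\SL_n(K)$, is false: conjugation by any non-central diagonal matrix fixes $D$ pointwise yet rescales the $E_{ij}(t)$. This is harmless for the theorem --- such a conjugation is still inner, and the constants it produces are exactly the coboundary $c_{ij}=b_ib_j^{-1}$ --- but the intermediate assertion that $\Phi$ is the \emph{identity} on $\SL_n(K)$ holds only after composing with that diagonal conjugation.
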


The proof of this theorem is contained in Chapter 3 of the classical textbook \cite{HaOM} by Hahn and O'Meara, although it may
take the interested reader some effort to track the details of proofs and references supplied in the text.

There exists a more direct approach to Theorem \ref{thmlinearstab} in the case when $\Ch K\neq 2$ (which is of primary interest
in this paper), which utilizes a theorem on the structure of automorphisms of $\GL_n(K)$ due to McDonald \cite{McD} and
Waterhouse \cite{Wat}. The latter theorem is as follows.

\begin{theorem}[McDonald \cite{McD}, Waterhouse \cite{Wat}]\label{thmmcdwat}
Let $K$ be a field of characteristic not equal to $2$ and let $n\geq 3$. Then every automorphism $\Phi$ of $\GL_n(K)$ has the
following standard form:
$$
\Phi = \Omega \circ \Ad_S^{\theta}\circ P_{\chi};
$$
-- here $P_{\chi}$ is the so-called radial automorphism, given by
$$
P_{\chi}(A) = \chi(A)A
$$
for some group homomorphism
$$
\chi: \GL_n(K)\rightarrow K^{\times};
$$

-- the automorphism $\Ad_S^{\theta}$ is a $\theta$-inner automorphism, i.e. an action by a base field automorphism $\theta$
followed by conjugation by a matrix $S$;

-- finally, $\Omega$ is either the identity map or the transpose-inverse automorphism
$$
A\to (A^{-1})^t.
$$
\end{theorem}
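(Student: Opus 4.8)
The plan is to reduce the classification of $\Aut\GL_n(K)$ to the classical description of the automorphisms of the projective group, and then to lift the answer back up. The three kinds of factors in the statement correspond to three ``layers'' of $\GL_n(K)$: the radial automorphisms $P_\chi$ are precisely those inducing the identity on $\operatorname{PGL}_n(K)=\GL_n(K)/Z$, where $Z$ is the group of scalar matrices; the automorphisms $\Ad_S^\theta$ realize the collineations of $\mathbb{P}^{n-1}(K)$; and $\Omega$ realizes the point--hyperplane duality.

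First I would pass to the projective level. The center $Z=Z(\GL_n(K))$ and the derived subgroup $[\GL_n(K),\GL_n(K)]=\SL_n(K)$ (the equality holds because $\SL_n(K)$ is perfect for $n\geq 3$) are characteristic, so $\Phi$ induces automorphisms of $\operatorname{PGL}_n(K)$ and of $\operatorname{PSL}_n(K)$. Next I would invoke the classical theorem (Schreier--van der Waerden, Dieudonn\'e) that for $n\geq 3$ every automorphism of $\operatorname{PSL}_n(K)$ is conjugation by an element of $\operatorname{PGL}_n(K)$, composed with the entrywise action of some $\theta\in\Aut K$ and possibly with the contragredient map $A\mapsto(A^{-1})^t$. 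This is where $\Ch K\neq 2$ is convenient: an involution $\sigma\neq\pm I$ is then semisimple with $\pm1$ eigenspaces, hence encoded by the ordered pair $(V_+,V_-)$ of its complementary eigenspaces; the reflections and anti-reflections (those with one eigenspace one-dimensional) can be singled out among involutions by the isomorphism type and embedding of their centralizers, and they parametrize the points, respectively the hyperplanes, of $\mathbb{P}^{n-1}(K)$. An abstract automorphism either fixes or interchanges these two families, hence induces a collineation or a correlation of $\mathbb{P}^{n-1}(K)$, and the fundamental theorem of projective geometry forces it to come from a $\theta$-semilinear transformation $S$ (or its dual); this produces the factor $\Ad_S^\theta$ and, in the correlation case, the factor $\Omega$ (which is genuinely new only for $n\geq 3$, being inner for $n=2$).

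Then I would lift and assemble. After composing $\Phi$ with the inverse of the $\Ad_S^\theta$ just found and, if necessary, with $\Omega$, the resulting automorphism $\Psi$ is trivial on $\operatorname{PSL}_n(K)$, hence --- as $\operatorname{PSL}_n(K)$ has trivial centralizer in $\operatorname{PGL}_n(K)$ --- induces the identity on $\operatorname{PGL}_n(K)$. Therefore $\Psi(A)A^{-1}\in Z$ for every $A$, i.e.\ $\Psi(A)=\chi(A)A$ for a function $\chi:\GL_n(K)\to K^\times$, and multiplicativity of $\Psi$ makes $\chi$ a group homomorphism (automatically trivial on $\SL_n(K)$, hence of the form $A\mapsto\mu(\det A)$). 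Thus $\Psi=P_\chi$, and undoing the compositions gives $\Phi=\Omega\circ\Ad_S^\theta\circ P_\chi$ once the factors are brought into this order using the identities $\Omega\circ\Ad_S^\theta=\Ad_{(S^t)^{-1}}^\theta\circ\Omega$ and $\Ad_S^\theta\circ P_\chi=P_{\chi'}\circ\Ad_S^\theta$, which exhibit the radial automorphisms as a normal subgroup of $\Aut\GL_n(K)$.

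The hard part will be the projective step: reconstructing the incidence geometry of $\mathbb{P}^{n-1}(K)$ from the bare group $\operatorname{PSL}_n(K)$, and then invoking the fundamental theorem of projective geometry to conclude that an arbitrary abstract automorphism must preserve or dualize that geometry. Both hypotheses get spent here --- $n\geq 3$ (so that $\operatorname{PSL}_n(K)$ is simple, admits the graph automorphism, and satisfies $[\GL_n,\GL_n]=\SL_n$) and $\Ch K\neq 2$ (so that involutions are semisimple, which is what makes the reflection classes easy to pin down). Over general commutative rings, as in \cite{McD,Wat}, the projective shortcut is unavailable and one argues directly with idempotents and involutions together with a localization argument, but the net conclusion is the same.
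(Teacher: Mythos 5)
The paper does not actually prove Theorem \ref{thmmcdwat}: it is imported wholesale from McDonald \cite{McD} and Waterhouse \cite{Wat}, who establish the more general statement for $\GL_n(R)$ over a commutative ring $R$ with $2$ invertible, working directly with involutions and idempotents plus localization. Your proposal is therefore not comparable to an argument in the paper, but it is the standard classical route for the field case and its architecture is sound: pass to $\operatorname{PSL}_n(K)$ via the characteristic subgroups $Z$ and $\SL_n(K)=[\GL_n(K),\GL_n(K)]$, invoke Schreier--van der Waerden/Dieudonn\'e together with the fundamental theorem of projective geometry to pin down the $\Ad_S^{\theta}$ and $\Omega$ factors, and then observe that what remains acts trivially on $\operatorname{PGL}_n(K)$ (trivial centralizer of $\operatorname{PSL}_n$) and is hence radial, with $\chi$ a homomorphism killing $\SL_n(K)$. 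What this buys over the cited ring-theoretic proofs is a geometric and comparatively self-contained argument in exactly the special case the paper needs; what it costs is that the genuinely hard content is still outsourced to the projective classification, and you should be aware that your parametrization there is slightly off: in $\operatorname{PGL}_n(K)$ a reflection and its negative (your ``anti-reflection'') represent the \emph{same} projective involution, so the eigenspace data attached to an involution class with dimensions $(1,n-1)$ is an unordered non-incident point--hyperplane pair, not a point or a hyperplane separately; the collineation-versus-correlation dichotomy is then read off from whether the induced permutation of these pairs respects or dualizes the incidence relation reconstructed from commutation. With that correction the outline goes through, and it would in fact be a reasonable alternative to the authors' reliance on \cite{HaOM} for readers who only need the field case.
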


Automorphisms of the form
$$
\Omega \circ \Ad_S^{\theta}\circ P_{\chi}
$$
are referred to as standard \cite{Wat}. To be more precise, Theorem \ref{thmmcdwat} is a special case of the more general result
of McDonald and Waterhouse that applies to matrices over commutative rings $R$ in which the number $2$ is invertible; the
added complexity corresponds to such rings possibly having non-trivial idempotents (being disconnected in the sense of the
Zariski topology), which in turn allows for $\Omega$ to act as transpose-inverse on one of the components. Note also that every
two of the three distinct automorphism types comprising $\Phi$ can be interchanged in a semidirect product fashion, so that
composition of automorphisms of standard form is again an automorphism of standard form.

The McDonald -- Waterhouse theorem allows a straightforward proof of Theorem \ref{thmlinearstab}, which the reader is invited
to conduct. In particular, one can demonstrate, by considering action of $\Phi$ on diagonal matrices which have all but one
non-zero entry equal $1$ that:

-- the case of transpose-inverse $\Omega$ is ruled out when $K$ is infinite;

-- the base field automorphism $\theta$ must be the identity map;

-- and finally that $\chi$ acting on such matrices must return $1$, which implies that every diagonal matrix belongs to the kernel
of $\chi$. Then, as $\chi$ is a mapping to an abelian group $K^{\times}$ and as the commutator subgroup of $\GL_n(K)$ is
$\SL_n(K)$, one concludes that $\chi$ is the trivial homomorphism, which completes the proof.

\smallskip

We now turn to the question of stabilization of the quadratic automorphism
$$
\psi: x_1\to x_1 + x_2x_3, \; x_2\to x_2,\;\ldots,\;x_n\to x_n.
$$
Since we have established Theorem \ref{thmlinearstab}, we may now assume that $\Phi$ preserves every point of $\GL_n(K)$.
We will show that there exists a scalar matrix $H$ (a homothety) such that the composition of $\Phi$ with the adjoint action of
$H$ preserves $\psi$; the resulting automorphism $\Phi_H$ will still preserve every linear automorphism, for $H$ is in the
center of $\GL_n(K)$.

The outline of the proof is as follows. We first find a suitable torus action such that $\psi$ belongs to a set of automorphisms
commuting with that action. Then we show that $\Phi$ restricts to a mapping of the commutator of that set (of automorphisms
commuting with the torus action) to itself; furthermore, $\psi$ belongs to the commutator. Finally, as the commutator's structure
will turn out to be fairly simple, the action of $\Phi$ will necessarily be one of multiplication (of the quadratic part) by a global
constant $a\in K^{\times}$; that will provide the homothety ratio corresponding to the matrix $H$.

We proceed with the relevant statements.

\begin{lemma}\label{lemtorusquadr}
Consider the action of $(\lambda_2,\ldots, \lambda_n)\in T^{n-1}$ given by
$$
x_1\to \lambda_2\lambda_3 x_1,\; x_2\to \lambda_2 x_2, \;x_3\to \lambda_3x_3,\;\ldots, \; x_n\to \lambda_n x_n.
$$
Then the set $S$ of automorphisms commuting with this action is generated by automorphisms of the form
$$
x_1\to a_1 x_1 +\beta x_2x_3,\;x_2\to a_2x_2,\;\ldots,\; x_n\to a_nx_n,
$$
$a_i\in K^{\times},\;\beta\in K$.
\end{lemma}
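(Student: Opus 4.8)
The plan is to identify explicitly the automorphisms commuting with the given $T^{n-1}$-action by tracking how the torus rescales the coefficients of a general polynomial automorphism, and then to verify that the resulting set of fixed automorphisms is precisely the set of products described. First I would apply Lemma \ref{LmTorComp}: if $\tau_{\lambda}$ denotes the linear automorphism $x_1\to \lambda_2\lambda_3 x_1$, $x_i\to \lambda_i x_i$ for $i\geq 2$, and if $\varphi: x_i\to \sum_J a_{iJ}x_J$ is an arbitrary polynomial automorphism, then $\tau_{\lambda}\circ\varphi\circ\tau_{\lambda}^{-1}$ sends $x_i$ to $\sum_J w_i^{-1}w_J\, a_{iJ}x_J$, where $w_1 = \lambda_2\lambda_3$, $w_i = \lambda_i$ for $i\geq 2$, and $w_J = \prod_k w_{j_k}$ is the product of the weights over the multi-index $J$. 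Commuting with the whole torus means $w_i^{-1}w_J = 1$ for every multi-index $J$ with $a_{iJ}\neq 0$, i.e. the weight vector of the monomial $x_J$ must equal the weight vector of $x_i$, as an element of the character lattice $\Z^{n}$ (with the identification coming from $w_1 = \lambda_2\lambda_3$).

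Next I would solve this system of weight equations. Writing the weight of $x_j$ as an element of $\Z^{n-1}$ (the exponents of $\lambda_2,\ldots,\lambda_n$): $x_2$ has weight $e_2$, $x_3$ has weight $e_3$, $x_j$ has weight $e_j$ for $j\geq 4$, while $x_1$ has weight $e_2 + e_3$. A monomial $x_J = \prod x_j^{m_j}$ has weight $m_1(e_2+e_3) + \sum_{j\geq 2} m_j e_j$. For the image of $x_j$ with $j\geq 2$ to consist only of monomials of weight $e_j$, the only possibility is $x_J = x_j$ itself (any monomial of weight $e_j$ must have $m_j = 1$ and all other $m_k = 0$, since the weights $e_2,\ldots,e_n$ are linearly independent and the $e_2+e_3$ contribution from $x_1$ cannot be cancelled). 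Hence $\varphi(x_j) = a_j x_j$ for $j\geq 2$. For the image of $x_1$, we need monomials of weight $e_2+e_3$: these are exactly $x_1$ itself (weight $e_2+e_3$) and $x_2 x_3$ (weight $e_2+e_3$); no other monomial has this weight. Therefore $\varphi(x_1) = a_1 x_1 + \beta x_2 x_3$ for scalars $a_1,\beta$. Requiring $\varphi$ to be an automorphism forces $a_1,\ldots,a_n \in K^{\times}$ (the linear part must be invertible, and its matrix is diagonal with these entries plus possibly an off-diagonal $\beta$-term that does not affect invertibility of the triangular matrix). This shows every element of $S$ has the stated form; conversely every automorphism of that form visibly commutes with the action, and the set is closed under composition, so $S$ is generated by (indeed equals) these automorphisms.

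The main obstacle, such as it is, is purely bookkeeping: one must be careful that the composite weight $w_J$ is computed with $x_1$ carrying the \emph{combined} weight $\lambda_2\lambda_3$ rather than an independent variable weight, so that the character lattice is genuinely $\Z^{n-1}$ and not $\Z^n$, and then check that no monomial other than $x_1$ and $x_2x_3$ lands in the $e_2+e_3$ weight space — in particular that higher-degree monomials like $x_2^2 x_3$ or $x_1 x_2$ are excluded, which follows since their weights are $2e_2+e_3$ and $2e_2+2e_3$ respectively, both different from $e_2+e_3$. A secondary point worth stating explicitly is that an automorphism commuting with the torus action has its inverse automatically commuting as well, and that the set $S$ being a subgroup is what lets us phrase the conclusion as "$S$ is generated by automorphisms of the form\ldots"; since in fact every such automorphism already has that form, $S$ is exactly this family, which is a solvable group (an extension of the diagonal torus by the one-parameter additive group of $\beta$'s).
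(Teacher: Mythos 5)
Your proposal is correct and follows essentially the same route as the paper: the paper derives this statement as the special case $i_2=i_3=1$ of Lemma \ref{lemtorusgen}, whose proof is precisely the direct application of Lemma \ref{LmTorComp} and the comparison of torus weights that you carry out in detail (your explicit enumeration of the monomials of weight $e_2+e_3$ is the content the paper leaves to the reader). The only cosmetic slip is your remark about the ``off-diagonal $\beta$-term'' in the matrix of the linear part: the term $\beta x_2x_3$ is quadratic, so the linear part is exactly the diagonal matrix $\operatorname{diag}(a_1,\ldots,a_n)$, and invertibility is simply $a_i\in K^{\times}$.
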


Lemma \ref{lemtorusquadr} is in fact a particular case of the following statement.

\begin{lemma}\label{lemtorusgen}
Consider the action of $(\lambda_2,\ldots, \lambda_n)\in T^{n-1}$ given by
$$
x_1\to \lambda_2^{i_2}\ldots\lambda_n^{i_n} x_1,\; x_2\to \lambda_2 x_2, \;x_3\to \lambda_3x_3,\;\ldots, \; x_n\to \lambda_n x_n,
$$
where $i_2,\ldots, i_n\in \mathbb{Z}_+$ are such that $i_2+\cdots + i_n>1$. Then the set $S$ of automorphisms commuting
with this action is generated by automorphisms of the form
$$
x_1\to a_1 x_1 +\beta x_2^{i_2}\ldots x_n^{i_n},\;x_2\to a_2x_2,\;\ldots,\; x_n\to a_nx_n,
$$
$a_i\in K^{\times},\;\beta\in K$.
\end{lemma}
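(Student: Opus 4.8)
The plan is to turn the statement into a weight bookkeeping problem via Lemma \ref{LmTorComp}, much as in the derivation of Lemma \ref{LmLindiag}. Write an arbitrary endomorphism of $K[x_1,\dots,x_n]$ as $\varphi:x_i\to\sum_J a_{iJ}x_J$, with $x_J=x_1^{m_1}\cdots x_n^{m_n}$ running over monomials, and let $\sigma_\lambda$ ($\lambda\in(K^\times)^{n-1}$) denote the automorphism of the given torus action, so that $\sigma_\lambda$ scales $x_i$ by a character $\mu_i(\lambda)$ and hence scales $x_J$ by $\mu_J(\lambda)=\prod_k\mu_k(\lambda)^{m_k}$. By Lemma \ref{LmTorComp}, $\sigma_\lambda\circ\varphi\circ\sigma_\lambda^{-1}:x_i\to\sum_J\mu_i(\lambda)^{-1}\mu_J(\lambda)\,a_{iJ}x_J$; since $K$ is algebraically closed (hence infinite), distinct Laurent monomials in $\lambda_2,\dots,\lambda_n$ are distinct functions on $(K^\times)^{n-1}$, so $\varphi$ commutes with the whole action iff $a_{iJ}=0$ whenever $\mu_J\neq\mu_i$, i.e. every monomial occurring in $\varphi(x_i)$ carries the same weight as $x_i$.

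The next step is to list those monomials. Give $x_1$ the weight $(i_2,\dots,i_n)\in\mathbb{Z}^{n-1}$ and $x_\ell$ the weight $e_{\ell-1}$ for $\ell\geq 2$, so the $\ell$-th coordinate of the weight of $x_J$ is $m_1 i_\ell+m_\ell$. For $i\geq 2$ one needs $m_1 i_\ell+m_\ell=\delta_{\ell i}$ for all $\ell$; nonnegativity forces $m_\ell=0$ ($\ell\neq i$), $m_i=1$, and either $m_1=0$ or else $i_\ell=0$ for every $\ell\neq i$ together with $m_1 i_i\leq 1$. The hypothesis $i_2+\cdots+i_n>1$ excludes the second alternative (it would force $i_i\geq 2$, contradicting $m_1 i_i\leq 1$ with $m_1\geq 1$), so $x_J=x_i$. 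For $i=1$ one needs $(m_1-1)i_\ell+m_\ell=0$ for all $\ell$; when $m_1\geq 1$ this gives $m_\ell=0$ and $(m_1-1)i_\ell=0$, and $i_2+\cdots+i_n>1$ again forces $m_1=1$, hence $x_J=x_1$, while $m_1=0$ yields $x_J=x_2^{i_2}\cdots x_n^{i_n}$. Thus every $\varphi$ commuting with the action has the asserted form $x_1\to a_1 x_1+\beta\,x_2^{i_2}\cdots x_n^{i_n}$, $x_\ell\to a_\ell x_\ell$ ($\ell\geq 2$).

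It remains to pin down invertibility and to conclude. Such an endomorphism is an automorphism precisely when all $a_\ell\in K^\times$: if some $a_\ell=0$ ($\ell\geq 2$) then $x_\ell$ is not in the image, and if $a_1=0$ then $x_1$ is not; conversely, when all $a_\ell\neq 0$ a direct check with Lemma \ref{LmTorComp} shows the automorphism commutes with the action, and a one-line computation shows these automorphisms are closed under composition and inversion. Hence $S$ is in fact \emph{equal} to the indicated family, which in particular is generated by it. Finally, Lemma \ref{lemtorusquadr} is the special case $i_2=i_3=1$, $i_4=\cdots=i_n=0$, where $x_2^{i_2}\cdots x_n^{i_n}=x_2 x_3$.

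No genuine obstacle arises: the whole proof is the weight computation above, and the single point requiring care is the systematic use of the hypothesis $i_2+\cdots+i_n>1$. That hypothesis is exactly what prevents $x_1$ from sharing its weight with some $x_\ell$ ($\ell\geq 2$); in the excluded degenerate case $i_\ell=1$ with all other $i$'s zero, the commutant would also contain $x_1\to x_1+c x_\ell$ and the transposition swapping $x_1$ and $x_\ell$, and the stated conclusion would genuinely fail.
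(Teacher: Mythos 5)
Your proof is correct and follows essentially the same route as the paper, which simply invokes Lemma \ref{LmTorComp} and leaves the weight comparison as a "direct" application; you have filled in exactly that computation (including the correct use of the hypothesis $i_2+\cdots+i_n>1$ and the invertibility check). No discrepancies.
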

\begin{proof}
This lemma follows from Lemma \ref{LmTorComp}; the application of the general statement of Lemma \ref{LmTorComp} is
direct.
\end{proof}

It is easy to see that the set $S$ from the preceding lemma actually consists of automorphisms of the form
$$
x_1\to a_1 x_1 +\beta x_2^{i_2}\ldots x_n^{i_n},\;x_2\to a_2x_2,\;\ldots,\; x_n\to a_nx_n,
$$
(which in this case is equivalent to being generated by such automorphisms), and that $S$ forms a subgroup of $\Aut_0
K[x_1,\ldots, x_n]$. The significance of Lemma \ref{lemtorusgen} is now apparent: if $\Phi$ is an automorphism of $\Aut_0
K[x_1,\ldots, x_n]$ for which every point of the maximal torus is a fixed point, then
$$
\Phi_{|S}: S\rightarrow S
$$
is an automorphism of $S$; in other words, $\Phi$ maps $S$ to itself. Indeed, if $\psi\in S$ is an automorphism that commutes
with the action of $T^{n-1}\subset T^n$ as in Lemma \ref{lemtorusgen} and $\Lambda$ represents a generic point of
$T^{n-1}$, then
$$
\Phi(\psi) = \Phi(\Lambda\circ\psi\circ \Lambda^{-1}) = \Lambda\circ \Phi(\psi)\circ \Lambda^{-1}
$$
which means that $\Phi(\psi)$ commutes with $\Lambda$, and is therefore an element of $S$. Thus the following lemma is
proved.
\begin{lemma}\label{lemphirestriction}
Let $S$ be as in Lemma \ref{lemtorusgen} and $\Phi$ an automorphism for which every point of $T^n$ is fixed. Then $\Phi$
maps $S$ to itself.
\end{lemma}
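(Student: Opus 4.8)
The plan is to use the fact that, by Lemma \ref{lemtorusgen}, the set $S$ is precisely the centralizer, inside $\Aut_0 K[x_1,\ldots,x_n]$, of the image of the $T^{n-1}$-action
$$
\rho: (\lambda_2,\ldots,\lambda_n)\longmapsto \bigl( x_1\to \lambda_2^{i_2}\cdots\lambda_n^{i_n} x_1,\ x_2\to \lambda_2 x_2,\ \ldots,\ x_n\to \lambda_n x_n \bigr),
$$
together with the observation that every automorphism $\rho(\lambda_2,\ldots,\lambda_n)$ acts diagonally on the generators and hence belongs to the standard maximal torus $T^n\subset \Aut_0 K[x_1,\ldots,x_n]$. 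In other words, $\rho(T^{n-1})$ is a subtorus of $T^n$.

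First I would record the consequence of the hypothesis: since $\Phi$ fixes every point of $T^n$, it fixes in particular every element of $\rho(T^{n-1})$. Next, take an arbitrary $\psi\in S$. By the characterization above, $\psi\circ\rho(\Lambda)=\rho(\Lambda)\circ\psi$ for every $\Lambda\in T^{n-1}$. Applying the group automorphism $\Phi$ to this identity and using $\Phi(\rho(\Lambda))=\rho(\Lambda)$, we get $\Phi(\psi)\circ\rho(\Lambda)=\rho(\Lambda)\circ\Phi(\psi)$ for all $\Lambda\in T^{n-1}$, i.e. $\Phi(\psi)$ centralizes $\rho(T^{n-1})$. By Lemma \ref{lemtorusgen} this forces $\Phi(\psi)\in S$, so $\Phi(S)\subseteq S$.

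Finally I would note that $\Phi^{-1}$ also fixes every point of $T^n$ (being the inverse of a map that does), so the same argument applied to $\Phi^{-1}$ yields $\Phi^{-1}(S)\subseteq S$, whence $S\subseteq\Phi(S)$. Combined with the previous inclusion this gives $\Phi(S)=S$, and since $\Phi$ is injective and $S$ is a subgroup of $\Aut_0 K[x_1,\ldots,x_n]$, the restriction $\Phi|_S$ is a group automorphism of $S$. I do not expect any genuine obstacle here: the only point that deserves a word of care is the remark that the $T^{n-1}$-action of Lemma \ref{lemtorusgen} lands inside the standard torus $T^n$ — immediate since it is diagonal on the $x_i$ — after which the argument is a purely formal manipulation with centralizers, plus the routine bookkeeping using $\Phi^{-1}$ to upgrade the inclusion $\Phi(S)\subseteq S$ to an equality.
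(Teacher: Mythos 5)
Your proof is correct and follows essentially the same route as the paper: take $\psi\in S$, write $\psi=\Lambda\circ\psi\circ\Lambda^{-1}$ for $\Lambda$ in the image of the $T^{n-1}$-action (which lies in the standard torus and is hence fixed by $\Phi$), apply $\Phi$, and conclude that $\Phi(\psi)$ commutes with the action and so lies in $S$. The extra bookkeeping with $\Phi^{-1}$ to upgrade $\Phi(S)\subseteq S$ to equality is a harmless and correct addition that the paper leaves implicit.
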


Furthermore, as $\Phi$ is homomorphic, we have the following property.

\begin{lemma}\label{lemphicommutant}
Let $S$ be as in Lemma \ref{lemtorusgen} and $[S,S]$ denote the commutator subgroup of $S$. Then $\Phi$ restricts to an
automorphism of $[S,S]$.
\end{lemma}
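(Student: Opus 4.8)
The plan is to obtain this statement as an essentially formal consequence of Lemma~\ref{lemphirestriction} together with the standard fact that a group automorphism carries the commutator subgroup of any subgroup it stabilizes onto itself; no new computation with polynomial automorphisms is required. Concretely, I first invoke Lemma~\ref{lemphirestriction}: since every point of $T^n$ is fixed by $\Phi$, the map $\Phi$ restricts to a group automorphism $\Phi_{|S}\colon S\to S$. Then I use that for any group $G$, any $\theta\in\Aut(G)$, and any $a,b\in G$ one has $\theta(aba^{-1}b^{-1})=\theta(a)\theta(b)\theta(a)^{-1}\theta(b)^{-1}$; as $[G,G]$ is generated by the elements $aba^{-1}b^{-1}$ and $\theta$ is a bijection, it follows that $\theta([G,G])=[G,G]$. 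Applying this with $G=S$ and $\theta=\Phi_{|S}$ yields $\Phi([S,S])=[S,S]$, so $\Phi$ restricts to an automorphism of $[S,S]$, as asserted.

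For the benefit of the subsequent steps it is worth recording the structure of $S$ and $[S,S]$ explicitly. Writing an element of $S$ (in the notation of Lemma~\ref{lemtorusgen}) as a pair $(a,\beta)$, where $a=(a_1,\dots,a_n)\in(K^{\times})^n$ records the diagonal part and $\beta\in K$ is the coefficient of $x_2^{i_2}\cdots x_n^{i_n}$ in the image of $x_1$, the composition law read off from Lemma~\ref{LmTorComp} is the semidirect product rule
\[
(a,\beta)\cdot(a',\beta') \;=\; \bigl(aa',\; a_1'\,\beta + a_2^{i_2}\cdots a_n^{i_n}\,\beta'\bigr).
\]
Thus the diagonal torus $\{(a,0)\}\cong T^n$ is an abelian complement to the normal additive subgroup $N=\{(\mathbf 1,\beta):\beta\in K\}\cong\mathbb{G}_a$, and since $S/N\cong T^n$ is abelian we have $[S,S]\subseteq N$. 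Conversely, a direct computation gives $[(a,0),(\mathbf 1,\beta)] = \bigl(\mathbf 1,\,(a_1^{-1}a_2^{i_2}\cdots a_n^{i_n}-1)\beta\bigr)$, and because $K$ is infinite one may choose $a$ with $a_1^{-1}a_2^{i_2}\cdots a_n^{i_n}\neq 1$, so every element of $N$ is a commutator; hence $[S,S]=N\cong(K,+)$.

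There is accordingly no genuine obstacle in the present lemma: it is immediate from $\Phi$ being a group automorphism that stabilizes $S$. The real work is deferred to the next step of the outlined plan, namely determining how the induced automorphism of $[S,S]\cong\mathbb{G}_a$ acts — i.e. showing that it is multiplication of $\beta$ by a single scalar $a\in K^{\times}$ depending only on the submodule spanned by $x_2^{i_2}\cdots x_n^{i_n}$ — which is where the simplicity of this one-parameter group, already made visible by the description above, will be exploited.
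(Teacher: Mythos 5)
Your proof is correct and coincides with the paper's own (one-line) argument: Lemma~\ref{lemphirestriction} gives $\Phi_{|S}\colon S\to S$, and a group automorphism of $S$ carries $[S,S]$ onto itself. The additional explicit description of $S$ as a semidirect product and the identification $[S,S]\cong(K,+)$ is accurate but is not needed here; it is essentially the content of the paper's subsequent Lemma~\ref{lemcommsubgrquadr}.
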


Suppose now that $S$ is the set of automorphisms which commute with the action of $T^{n-1}$ as in Lemma
\ref{lemtorusquadr}:
$$
x_1\to \lambda_2\lambda_3 x_1,\; x_2\to \lambda_2 x_2, \;x_3\to \lambda_3x_3,\;\ldots, \; x_n\to \lambda_n x_n.
$$
Then the following holds.
\begin{lemma}\label{lemcommsubgrquadr}
 The commutator subgroup $[S,S]$ of the group $S$ consists precisely of automorphisms of the form
$$
x_1\to x_1 +\beta x_2x_3,\;x_2\to x_2,\;\ldots,\; x_n\to x_n,
$$
$\beta \in K$.
\end{lemma}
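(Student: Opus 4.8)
The plan is to compute $[S,S]$ directly from the explicit description of $S$ supplied by Lemma \ref{lemtorusquadr}. Write a general element of $S$ as a pair $(a,\beta)$ standing for the automorphism
$$
g_{a,\beta}: x_1\to a_1 x_1 + \beta x_2 x_3,\; x_2\to a_2 x_2,\;\ldots,\; x_n\to a_n x_n,
$$
with $a=(a_1,\ldots,a_n)\in (K^\times)^n$ and $\beta\in K$. First I would verify that composition in $S$ obeys the rule $g_{a,\beta}\circ g_{b,\gamma} = g_{ab,\; a_1\gamma + \beta b_2 b_3}$ (a short direct computation: apply $g_{a,\beta}$ to the images of $g_{b,\gamma}$, using that $x_2 x_3\mapsto a_2 a_3 x_2 x_3$ under the linear diagonal part, and noting $a_2 a_3 = a_1$ is \emph{not} imposed — the $x_1$-weight is the product of the $x_2$- and $x_3$-weights only for elements of the torus, but here the coefficient bookkeeping is just $\beta \mapsto a_1\gamma + \beta b_2 b_3$). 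This exhibits $S$ as a semidirect product $(K^\times)^n \ltimes K$, where $(K^\times)^n$ acts on the additive group $K$ through the character $a\mapsto a_1^{-1} a_2 a_3$ (equivalently, the copy of $\mathbb{G}_a$ sits inside as the normal subgroup $\{g_{(1,\ldots,1),\beta}\}$ and conjugation by $g_{a,0}$ scales $\beta$ by $a_1^{-1}a_2 a_3$).

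Next I would compute the commutator subgroup of this semidirect product. Since the quotient $S/\mathbb{G}_a \cong (K^\times)^n$ is abelian, $[S,S]\subseteq \mathbb{G}_a = \{g_{(1,\ldots,1),\beta}:\beta\in K\}$, which is exactly the asserted set of automorphisms $x_1\to x_1+\beta x_2 x_3$. For the reverse inclusion I would exhibit an explicit commutator realizing an arbitrary $\beta\in K$: taking $h = g_{a,0}$ with $a_1^{-1}a_2 a_3 = c \neq 1$ (possible since $K$ is infinite — e.g. $a_2 = c$, all other $a_i = 1$) and $u = g_{(1,\ldots,1),\beta_0}$, the commutator $[h,u] = h u h^{-1} u^{-1}$ equals $g_{(1,\ldots,1),(c-1)\beta_0}$ by the composition rule. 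As $c-1\neq 0$ and $\beta_0$ ranges over $K$, the values $(c-1)\beta_0$ exhaust $K$; hence every $g_{(1,\ldots,1),\beta}$ is a single commutator, and $[S,S] = \mathbb{G}_a$ exactly as claimed.

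The only genuinely delicate point is the bookkeeping in the composition law — in particular making sure the character through which $(K^\times)^n$ acts on $\beta$ is nontrivial, so that $c\neq 1$ is attainable; this is where infinitude of $K$ (already a standing hypothesis) and the fact that the $x_1$-weight $\lambda_2\lambda_3$ genuinely involves $\lambda_2,\lambda_3$ both enter. Everything else is routine. I would also remark that the characteristic plays no role here: the argument is purely a computation in the solvable group $S$, and the identification of $[S,S]$ with the additive group $K$ via $\beta$ is characteristic-free. (The subtleties of positive characteristic enter only later, in Step 3 of the outlined plan.)
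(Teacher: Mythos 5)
Your proof is correct and is essentially the paper's own argument carried out in full: the paper's proof consists of the one-line remarks that the inclusion $[S,S]\subseteq\{x_1\to x_1+\beta x_2x_3\}$ is a ``direct computation'' and that surjectivity onto all $\beta\in K$ ``is not difficult to establish when $K$ is infinite,'' and your semidirect-product bookkeeping $S\cong (K^{\times})^n\ltimes K$ with the explicit commutator $[h,u]=g_{(1,\ldots,1),(c-1)\beta_0}$ is precisely that computation made explicit. The only quibble is a convention mismatch (with your stated composition rule the conjugation character works out to $a_1a_2^{-1}a_3^{-1}$ rather than $a_1^{-1}a_2a_3$), which affects nothing since only its nontriviality is used.
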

\begin{proof}
  Every element of $[S,S]$ is verified to be of the stated form by direct computation. It is also not difficult to establish that, when $K$ is infinite, every $\beta \in K$ represents an element of $[S,S]$.
\end{proof}

Now, $\Phi$ acts as an automorphism of $[S,S]$, which means that its action on an arbitrary element $\varphi$ of the form
$$
x_1\to x_1 +\beta x_2x_3,\;x_2\to x_2,\;\ldots,\; x_n\to x_n
$$
yields the element $\Phi(\varphi)$ of the form
$$
x_1\to x_1 +a\beta x_2x_3,\;x_2\to x_2,\;\ldots,\; x_n\to x_n,
$$
with $a=a_{\varphi}\in K^{\times}$ whenever $\beta\in K^{\times}$. The action of $\Phi$ thus reduces on $[S,S]$ to a
multiplication of the quadratic term by a number. Our immediate objective is to demonstrate that this number $a_{\varphi}$ is
actually a constant.
\begin{lemma}\label{lemglobalconstquadr}
In the notation above, $a_{\varphi}$ is independent of $\varphi$.
\end{lemma}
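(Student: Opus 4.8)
The plan is to leverage the torus-invariance of $\Phi$ once more, this time to pin down the constant $a_\varphi$. Recall from Lemma \ref{lemcommsubgrquadr} and the discussion following it that every element of $[S,S]$ has the form $\varphi_\beta\colon x_1\to x_1+\beta x_2x_3$, $x_j\to x_j$ for $j\geq 2$, and that $\Phi$ restricts to a group automorphism of $[S,S]$ sending $\varphi_\beta$ to $\varphi_{f(\beta)}$, where $f\colon K\to K$ satisfies $f(\beta)=a_{\varphi_\beta}\beta$, is bijective, and fixes $0$ (since $\varphi_0=\Id$). What has to be shown is that $f$ is multiplication by a single scalar $a\in K^{\times}$.

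First I would compute, via the ``in particular'' clause of Lemma \ref{LmTorComp}, the conjugate of $\varphi_\beta$ by the diagonal linear automorphism $\sigma\colon x_2\to c\,x_2$ (all other generators fixed), where $c\in K^{\times}$. A direct application gives $\sigma\circ\varphi_\beta\circ\sigma^{-1}=\varphi_{c\beta}$: the coefficient of $x_1$ in the image of $x_1$ is unchanged, the coefficient $\beta$ of $x_2x_3$ is multiplied by $c$, and the images of $x_2,\dots,x_n$ are untouched. Since $\sigma$ lies in the standard torus $T^n$, which $\Phi$ fixes pointwise, applying $\Phi$ to both sides and using that $\Phi$ is a homomorphism yields
\[
\varphi_{f(c\beta)}=\Phi(\varphi_{c\beta})=\sigma\circ\Phi(\varphi_\beta)\circ\sigma^{-1}=\sigma\circ\varphi_{f(\beta)}\circ\sigma^{-1}=\varphi_{c\,f(\beta)},
\]
that is, $f(c\beta)=c\,f(\beta)$ for all $c\in K^{\times}$ and $\beta\in K$. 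Specializing $\beta=1$ gives $f(c)=c\,f(1)$ for every $c\in K^{\times}$; putting $a:=f(1)\in K^{\times}$ we obtain $a_{\varphi_\beta}=a$ for all $\beta\neq 0$, which is exactly the assertion of the lemma.

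I do not expect a genuine obstacle here: the whole argument is a short formal consequence of the torus-invariance of $\Phi$ together with its being a homomorphism, once the rescaling identity $\sigma\circ\varphi_\beta\circ\sigma^{-1}=\varphi_{c\beta}$ is recorded. The only place demanding a modicum of care is the bookkeeping in that conjugation --- verifying that the scaling factor coming out of Lemma \ref{LmTorComp} is precisely $c$ and that no other coefficient is perturbed --- but this is immediate from the explicit formula there. (The same mechanism would work verbatim in the more general setting of Lemma \ref{lemtorusgen}, conjugating by $x_2\to c\,x_2$ to rescale the monomial $x_2^{i_2}\cdots x_n^{i_n}$.)
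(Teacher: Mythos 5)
Your proof is correct and is essentially the paper's own argument: the paper likewise conjugates the $\beta=1$ automorphism $\psi$ by the torus element $\Lambda\colon x_2\to\beta x_2$ to obtain $\varphi_\beta$, and uses the point-wise stabilization of the torus to conclude $a_{\varphi}\beta=a_{\psi}\beta$. Your version merely packages the same computation as the functional equation $f(c\beta)=c\,f(\beta)$ before specializing to $\beta=1$.
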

\begin{proof}
  Indeed, let $\varphi$ be given and $a = a_{\varphi}$ is the multiplier corresponding to the action of $\Phi$ on $\varphi$. Consider the automorphism $\psi$ given by
  $$
x_1\to x_1 + x_2x_3,\;x_2\to x_2,\;\ldots,\; x_n\to x_n
  $$
(i.e. it is $\varphi$ with $\beta = 1$). Then $\Phi(\psi)$ is
$$
x_1\to x_1 +a_{\psi} x_2x_3,\;x_2\to x_2,\;\ldots,\; x_n\to x_n
$$
for some $a_{\psi}\neq 0$. Now observe that $\varphi$ (for $\beta\neq 0$) and $\psi$ are conjugate to one another by an element
of the torus:
$$
\varphi = \Lambda\circ \psi\circ\Lambda^{-1}
$$
with
$$
\Lambda: x_1\to x_1,\;\;x_2\to \beta x_2,\;\;x_3\to x_3,\;\;\ldots,\;\;x_n\to x_n.
$$
Therefore, as points of the torus are stabilized by $\Phi$, we have
$$
\Phi(\varphi) = \Lambda\circ \Phi(\psi)\circ\Lambda^{-1}
$$
which immediately implies that
$$
a_{\varphi}\beta = a_{\psi}\beta,
$$
or
$$
a_{\varphi} = a_{\psi} = a
$$
as desired.
\end{proof}

The main result of this section (on the stabilization of $\psi$) now follows.
\begin{theorem}\label{thmstabquadr}
Suppose $\Phi$ is an automorphism of $\Aut_0 K[x_1,\ldots, x_n]$ which preserves point-wise the subgroup $\GL_n(K)$ of
linear automorphisms. Then there is a homothety
$$
H: x_1\to hx_1,\;\;\ldots,\;\; x_n\to hx_n,\;\;h\in K^{\times}
$$
such that the automorphism $\Phi_H$ obtained by composing $\Phi$ with the inner automorphism given by conjugation by $H$
preserves point-wise $\GL_n(K)$ as well as the quadratic automorphism
$$
\psi: x_1\to x_1+x_2x_3,\;\;x_2\to x_2,\;\;\ldots,\;\; x_n\to x_n.
$$
\end{theorem}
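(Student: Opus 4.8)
The plan is to package together Lemmas \ref{lemphirestriction}--\ref{lemglobalconstquadr}. Since $\Phi$ fixes every linear automorphism, it fixes in particular every point of the standard (maximal) torus $T^n\subset\GL_n(K)$; so, taking $S$ to be the set of automorphisms commuting with the $T^{n-1}$-action of Lemma \ref{lemtorusquadr} (a special case of the action in Lemma \ref{lemtorusgen}), Lemma \ref{lemphirestriction} shows that $\Phi$ maps $S$ onto itself, and Lemma \ref{lemphicommutant} then shows that $\Phi$ restricts to an automorphism of the commutator subgroup $[S,S]$. By Lemma \ref{lemcommsubgrquadr}, $[S,S]$ consists precisely of the automorphisms
$$
x_1\to x_1+\beta x_2x_3,\;\; x_2\to x_2,\;\;\ldots,\;\; x_n\to x_n \qquad (\beta\in K),
$$
and the composition law makes the parameter $\beta$ add; thus $[S,S]\cong (K,+)$ as groups, and $\psi$ is the element with $\beta=1$. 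By Lemma \ref{lemglobalconstquadr}, the induced automorphism of $[S,S]$ has the form $\beta\mapsto a\beta$ for one fixed $a\in K^\times$; in particular $\Phi(\psi)$ is the automorphism $x_1\to x_1+a x_2x_3$, $x_i\to x_i$ for $i\geq 2$.

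Next I would exhibit the homothety that absorbs the factor $a$. Let $H:x_i\to h x_i$ with $h\in K^\times$ to be chosen, and let $\Phi_H$ be the composition of $\Phi$ with conjugation by $H$, so that $\Phi_H(\varphi)=H\circ\Phi(\varphi)\circ H^{-1}$. Applying Lemma \ref{LmTorComp} with $\alpha=H$ and $\beta=H^{-1}$, a monomial of total degree $d$ occurring in $\Phi(\psi)(x_i)$ is multiplied by $h^{-1}h^{d}=h^{d-1}$; hence the degree-one terms are unchanged and the quadratic term $a x_2x_3$ becomes $ah\,x_2x_3$, i.e.
$$
\Phi_H(\psi):\;\; x_1\to x_1+ah\,x_2x_3,\;\; x_2\to x_2,\;\;\ldots,\;\; x_n\to x_n.
$$
Choosing $h=a^{-1}$ (possible since $a\in K^\times$) yields $\Phi_H(\psi)=\psi$. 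Finally, $H$ is a scalar matrix, hence lies in the center of $\GL_n(K)$, so conjugation by $H$ restricts to the identity on $\GL_n(K)$; since $\Phi$ already fixes every linear automorphism point-wise, so does $\Phi_H$. This completes the proof.

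The genuinely substantive content here is upstream: the identification of $[S,S]$ in Lemma \ref{lemcommsubgrquadr} (in particular that every $\beta\in K$ is realized, for which $K$ infinite is used), together with the globality of the constant $a$ in Lemma \ref{lemglobalconstquadr} (obtained from the torus conjugation $\varphi=\Lambda\circ\psi\circ\Lambda^{-1}$). Granting those, I do not expect any real obstacle at the present stage: the homothety ratio is forced to equal $a^{-1}$, and compatibility with $\GL_n(K)$ is automatic from centrality. The only point requiring a small amount of care is precisely this last one — one must conjugate by a \emph{scalar} matrix rather than a general linear automorphism, so that the previously achieved stabilization of $\GL_n(K)$ is not disturbed.
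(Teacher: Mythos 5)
Your proposal is correct and follows essentially the same route as the paper: both reduce the statement to Lemma \ref{lemglobalconstquadr} (the global constant $a$), take the homothety ratio $h=a^{-1}$, verify via the scaling computation of Lemma \ref{LmTorComp} that conjugation by $H$ multiplies the quadratic term by $h$, and use centrality of scalar matrices to preserve the point-wise stabilization of $\GL_n(K)$. No gaps.
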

\begin{proof}
 If $\Phi$ is given, then the homothety $H$ is specified by the homothety ratio $h=a^{-1}$, where $a$ is the constant of Lemma \ref{lemglobalconstquadr}.
 Indeed, as the center of $\GL_n(K)$ is given by homothety automorphisms (scalar matrices), the composition of $\Phi$ with the inner automorphism corresponding to $H$ does not impact point-wise
 preservation of $\GL_n(K)$. Furthermore, if $\psi$ is as before,
 $$
\psi: x_1\to x_1+x_2x_3,\;\;x_2\to x_2,\;\;\ldots,\;\; x_n\to x_n
 $$
 and $\Phi(\psi)$ is
 $$
x_1\to x_1+ax_2x_3,\;\;x_2\to x_2,\;\;\ldots,\;\; x_n\to x_n
 $$
for $a\in K^{\times}$, then (mapping $x_1$ step by step):
$$
H\circ \Phi(\psi)\circ H^{-1}: x_1\to ax_1\to a(x_1 + ax_2x_3) \to a(a^{-1}x_1 + a^{-1}x_2x_3) = \psi(x_1),
$$
the action on the rest of the generators being obvious. This means exactly that $\psi$ is stabilized by the composite
automorphism $\Phi_H$ which maps $\varphi$ to the automorphism conjugated via $H$ to $\Phi(\varphi)$.
\end{proof}

Finally, combining this result with the previously established facts about automorphisms which preserve the maximal torus, we
arrive at the main result of this subsection.
\begin{theorem}\label{thmstabquadrmain}
Suppose $\Phi$ is an automorphism of $\Aut_0 K[x_1,\ldots, x_n]$ which preserves point-wise the maximal torus $T^n$. Then
there exists a linear automorphism $S$ such that the automorphism $\Phi_S$ given by the composition of $\Phi$ with the inner
automorphism specified by $S$ preserves point-wise $\GL_n(K)$ and the quadratic automorphism $\psi$.
\end{theorem}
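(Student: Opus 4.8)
The plan is to obtain the statement by combining Theorem~\ref{thmlinearstab} with Theorem~\ref{thmstabquadr}; the whole argument is a short piece of bookkeeping with inner automorphisms, so I expect no genuine obstacle. The substantive work has already been done --- in the chain of Lemmas~\ref{lemtorusquadr}--\ref{lemglobalconstquadr} feeding into Theorem~\ref{thmstabquadr}, and in the classical facts underlying Theorem~\ref{thmlinearstab} --- and the only point that demands attention here is keeping track of the order in which conjugations are composed with $\Phi$.

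First I would pass to the restriction of $\Phi$ on linear automorphisms. Since $\Phi$ fixes every point of $T^n$, the Proposition deduced from Lemma~\ref{LmLindiag} shows that $\Phi$ maps $\GL_n(K)$ into $\GL_n(K)$; applying the same to $\Phi^{-1}$ (which also fixes $T^n$ point-wise) shows that $\Phi$ restricts to a group automorphism $\Phi|_{\GL_n(K)}$ of $\GL_n(K)$. The standard torus $T^n\subset\GL_n(K)$ is exactly the subgroup of invertible diagonal matrices, and it is fixed point-wise, so $\Phi|_{\GL_n(K)}$ preserves every diagonal matrix. Theorem~\ref{thmlinearstab} then yields a matrix $S_0\in\GL_n(K)$ with $\Phi(A)=S_0 A S_0^{-1}$ for every $A\in\GL_n(K)$.

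Next I would correct $\Phi$ on $\GL_n(K)$ by an inner automorphism. Set $\Phi_1 := \Ad_{S_0^{-1}}\circ\Phi$, where $\Ad_T$ denotes conjugation by the linear automorphism $T$ inside $\Aut_0 K[x_1,\ldots,x_n]$. Then $\Phi_1$ is again an automorphism of $\Aut_0 K[x_1,\ldots,x_n]$ and, for $A\in\GL_n(K)$, one has $\Phi_1(A)=S_0^{-1}(S_0 A S_0^{-1})S_0=A$; hence $\Phi_1$ preserves $\GL_n(K)$ point-wise (in particular it still fixes every point of $T^n$, so nothing needed downstream is lost). Now Theorem~\ref{thmstabquadr} applies to $\Phi_1$: there is a homothety $H\colon x_i\mapsto h x_i$, $h\in K^{\times}$, such that $\Phi_2 := \Ad_H\circ\Phi_1$ preserves $\GL_n(K)$ point-wise and also fixes the quadratic automorphism $\psi$.

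Finally, using $\Ad_H\circ\Ad_{S_0^{-1}}=\Ad_{H S_0^{-1}}$ (the assignment $T\mapsto\Ad_T$ being a homomorphism on $\GL_n(K)$), I would set $S := H S_0^{-1}\in\GL_n(K)$. Then $\Phi_S := \Ad_S\circ\Phi$ coincides with $\Phi_2$, and so it preserves $\GL_n(K)$ point-wise and fixes $\psi$, which is precisely the claim. The only place where care is needed is matching the conventions for $\Phi_H$ and $\Phi_S$ --- whether conjugation is applied before or after $\Phi$ --- and this is purely notational.
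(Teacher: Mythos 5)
Your proposal is correct and follows exactly the route the paper intends (the paper gives no explicit proof, only the remark that Theorem~\ref{thmstabquadrmain} follows by combining Theorem~\ref{thmlinearstab} with Theorem~\ref{thmstabquadr}); your bookkeeping of the conjugations, including the check that $\Phi^{-1}$ also restricts to $\GL_n(K)$ so that $\Phi|_{\GL_n(K)}$ is genuinely an automorphism, fills in the details faithfully.
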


\subsection{Action on higher degree elementary automorphisms}

Since we have established Theorem \ref{thmstabquadrmain}, the proof of the main result of this paper -- Theorem
\ref{thmripsmain} -- will be completed once we demonstrate that every automorphism $\Phi$ which fixes every point of
$\GL_n(K)$ as well as the quadratic automorphism $\psi$ must also fix every elementary automorphism. We note (cf. Definition
\ref{defelementaryaut} and the comment immediately afterwards; also, generator renaming is a linear automorphism) that it
suffices to establish preservation of elementary automorphisms of the form
$$
x_1\to x_1 + M,\;\; x_2\to x_2,\;\;\ldots,\;\; x_n\to x_n
$$
with $M$ a monomial of total degree $\geq 2$ free of $x_1$.

First and foremost, Lemmas \ref{lemtorusgen}, \ref{lemphirestriction} and \ref{lemphicommutant} together with the immediate
analogs of Lemmas \ref{lemcommsubgrquadr} and \ref{lemglobalconstquadr} imply the following result.
\begin{proposition}\label{propphiaction}
Let $\Phi$ be an automorphism of $\Aut_0 K[x_1,\ldots, x_n]$ which fixes every point of the maximal torus. Let $\varphi$ be
an automorphism of the form
$$
x_1\to x_1 + M,\;\; x_2\to x_2,\;\;\ldots,\;\; x_n\to x_n
$$
with $M = \beta x_2^{i_2}\ldots x_n^{i_n}$, $\beta\in K^{\times}$ and $i_2+\cdots+i_n\geq 2$. Then
$$
\Phi(\varphi): x_1\to x_1 + aM,\;\;x_2\to x_2,\;\;\ldots,\;\; x_n\to x_n
$$
with $a\in K^{\times}$ a constant for every fixed multi-index $(i_2,\ldots, i_n)$ (i.e. independent of $\beta$).
\end{proposition}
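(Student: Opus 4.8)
The plan is to establish Proposition \ref{propphiaction} by running, for an arbitrary monomial $M_0 = x_2^{i_2}\ldots x_n^{i_n}$ with $i_2+\cdots+i_n\geq 2$, precisely the argument used for the quadratic case $M_0 = x_2x_3$ in Lemmas \ref{lemtorusquadr}--\ref{lemglobalconstquadr}, now with the general torus of Lemma \ref{lemtorusgen} in place of the one in Lemma \ref{lemtorusquadr}. First I would fix the torus action $\sigma$ of $T^{n-1}$ given by $x_1\to \lambda_2^{i_2}\ldots\lambda_n^{i_n}x_1$, $x_j\to\lambda_jx_j$ ($2\leq j\leq n$); by Lemma \ref{lemtorusgen} together with the remark following it, the centralizer $S$ of $\sigma$ in $\Aut_0 K[x_1,\ldots,x_n]$ consists \emph{exactly} of the automorphisms $\sigma_{a,\beta}\colon x_1\to a_1x_1+\beta M_0$, $x_j\to a_jx_j$ ($a_i\in K^{\times}$, $\beta\in K$), and $S$ is a subgroup. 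Since $\Phi$ fixes $T^n\supseteq T^{n-1}$ pointwise, Lemma \ref{lemphirestriction} gives $\Phi(S)=S$ and Lemma \ref{lemphicommutant} gives that $\Phi$ restricts to an automorphism of the commutator subgroup $[S,S]$.

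Next I would record the analogue of Lemma \ref{lemcommsubgrquadr}: the assignment $\sigma_{a,\beta}\mapsto(a_1,\ldots,a_n)$ is a homomorphism of $S$ onto the abelian group $(K^{\times})^n$, so $[S,S]$ is contained in its kernel, which is exactly $\{\theta_\beta : \beta\in K\}$, where $\theta_\beta\colon x_1\to x_1+\beta M_0$, $x_j\to x_j$, a subgroup isomorphic to $(K,+)$. For the reverse inclusion, a direct computation via Lemma \ref{LmTorComp} gives the commutator $[\sigma_{a,0},\theta_\gamma]=\theta_{(a_1^{-1}a_2^{i_2}\ldots a_n^{i_n}-1)\gamma}$; since $K$ is infinite one may choose $a$ with $a_1^{-1}a_2^{i_2}\ldots a_n^{i_n}\neq 1$ and let $\gamma$ run over $K$, whence $[S,S]=\{\theta_\beta : \beta\in K\}\cong(K,+)$.

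It then remains to run the analogue of Lemma \ref{lemglobalconstquadr}. By the previous paragraph $\Phi$ induces a group automorphism of $[S,S]\cong(K,+)$, that is, $\Phi(\theta_\beta)=\theta_{f(\beta)}$ for an additive bijection $f\colon K\to K$ with $f(0)=0$. For each $c\in K^{\times}$ one chooses the diagonal $\Lambda_c\in T^n$ with $x_1\to c^{-1}x_1$ and $x_j\to x_j$, so that $\Lambda_c\circ\theta_\beta\circ\Lambda_c^{-1}=\theta_{c\beta}$ for all $\beta$ (the achievable scalars $a_1^{-1}a_2^{i_2}\ldots a_n^{i_n}$ already exhaust $K^{\times}$ through $a_1^{-1}$ alone); applying $\Phi$ and using $\Phi(\Lambda_c)=\Lambda_c$ yields $\theta_{f(c\beta)}=\Lambda_c\circ\theta_{f(\beta)}\circ\Lambda_c^{-1}=\theta_{cf(\beta)}$, hence $f(c\beta)=cf(\beta)$. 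Setting $\beta=1$ gives $f(c)=ac$ for $a:=f(1)$ and all $c\in K$, with $a\neq 0$ since $f$ is injective. Thus $\Phi(\theta_\beta)=\theta_{a\beta}$; and since any $\varphi$ as in the statement is $\theta_\beta$ for the $\beta\in K^{\times}$ with $M=\beta M_0$, we conclude $\Phi(\varphi)\colon x_1\to x_1+a\beta M_0 = x_1+aM$, with $a$ depending only on the multi-index $(i_2,\ldots,i_n)$ (through $S$), as claimed.

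This is a transcription of the quadratic case, so I do not anticipate a substantive obstacle; the content really lives in the cited lemmas. The one point deserving attention in positive characteristic is the identification of $S$ in Lemma \ref{lemtorusgen} as exactly --- not merely generated by --- the stated automorphisms, but this stays harmless, since the power maps $t\mapsto t^{i_j}$ on $K^{\times}$ remain surjective ($K$ being algebraically closed), so no scalars are lost when $p\mid i_j$. The genuinely load-bearing use of torus-equivariance is the step $f(\beta)=a\beta$: without it, $f$ could a priori be a wild additive automorphism of $(K,+)$, and it is exactly equivariance under conjugation by the fixed torus that forces the single global constant $a$, independent of $\beta$.
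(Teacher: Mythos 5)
Your proposal is correct and follows exactly the route the paper indicates: the paper derives Proposition \ref{propphiaction} by invoking Lemmas \ref{lemtorusgen}, \ref{lemphirestriction} and \ref{lemphicommutant} together with the ``immediate analogs'' of Lemmas \ref{lemcommsubgrquadr} and \ref{lemglobalconstquadr} for the general torus action, which is precisely the transcription you carry out (your explicit computation of $[S,S]$ and of the conjugation identity $f(c\beta)=cf(\beta)$ just fills in details the paper leaves implicit).
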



Our main objective therefore reduces to demonstrating that whenever $\Phi$ stabilizes $\GL_n(K)$ together with $\psi$ the
constant $a$ from Proposition \ref{propphiaction} is always equal to $1$. The proof requires some further preparation, which we
proceed to make.

We begin by citing the following important technical result, which appears as Lemma 4.13 in \cite{KBYu}.

\begin{lemma}  \label{LmR1}
Linear transformations of $K^3$ and $$\psi: x\to x,\; y\to y,\; z\to z+xy$$ generate all mappings of the form
$$\phi_m^b: x\to x,\; y\to y,\; z\to z+bx^m,\;\; b\in K.$$
\end{lemma}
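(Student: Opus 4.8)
The plan is to argue by induction on $m$, using a single commutator identity to raise the $x$-degree by one at each step. Write $G\leq \Aut_0 K[x,y,z]$ for the group generated by $\GL_3(K)$ and $\psi$. Since conjugating the automorphism $z\mapsto z+x^m$ (fixing $x,y$) by the diagonal linear map $z\mapsto cz$ produces $z\mapsto z+c^{-1}x^m$, and $c^{-1}$ ranges over all of $K^\times$ as $c$ does (the case $b=0$ being the identity), it suffices to show that $\theta_m:z\mapsto z+x^m$ lies in $G$ for every $m\geq 1$. The base case $m=1$ is immediate, since $\theta_1$ is a linear transformation. So I would fix $m\geq 2$ and assume $\theta_{m-1}\in G$.

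The first move is to convert $\theta_{m-1}$ into a shear on a different variable: conjugating $\theta_{m-1}$ by the linear transposition $\tau\in\GL_3(K)$ that swaps $y$ and $z$ (fixing $x$) yields $A:=\tau\circ\theta_{m-1}\circ\tau^{-1}$, given by $x\mapsto x,\ y\mapsto y+x^{m-1},\ z\mapsto z$, and $A\in G$. The key step is then to compute the commutator $\psi^{-1}\circ A^{-1}\circ\psi\circ A$. Tracking the image of each generator, one checks that $A^{-1}\circ\psi\circ A$ sends $z$ to $z+xy-x^m$ while fixing $x$ and $y$, so composing with $\psi^{-1}$ cancels the $xy$ term and leaves $x\mapsto x,\ y\mapsto y,\ z\mapsto z-x^m$. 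Finally, conjugating this by the linear map $z\mapsto -z$ (which is harmless in characteristic $2$ since $-x^m=x^m$ there) gives $\theta_m:z\mapsto z+x^m$, closing the induction.

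I do not expect a genuine obstacle here: the only real content is recognizing the right commutator. The mechanism is that shearing $z$ by $xy$ and then $y$ by $x^{m-1}$ multiplies the two $x$-monomials to produce $x^m$, and — crucially — the auxiliary $xy$ and $x^{m-1}$ contributions cancel, so that the commutator is again an elementary automorphism of exactly the shape $z\mapsto z+(\text{power of }x)$; this is what lets the induction feed back into itself. It is also worth remarking that only elements of $\GL_3(K)$ enter through the variable swap and the final rescalings, and that nothing in the argument uses the characteristic of $K$, so the statement in fact holds over an arbitrary field.
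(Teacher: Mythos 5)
Your proof is correct and follows essentially the same route as the paper: induction on $m$, conjugating $\phi^b_{m-1}$ by the linear swap of $y$ and $z$ to obtain a shear on $y$, and then extracting $z\mapsto z+(\text{const})\,x^m$ from a commutator with $\psi$. The only cosmetic differences are that you normalize $b=1$ at the outset by a diagonal conjugation (the paper carries $b$ through the induction) and that your ordering of the commutator produces $-x^m$, requiring the extra conjugation by $z\mapsto -z$; your closing observation that the argument is characteristic-free is also accurate.
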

\begin{proof}
  We proceed by induction.
Suppose we have an automorphism $$\phi^b_{m-1}: x\to x,\; y\to y,\; z\to z+bx^{m-1}.$$  Conjugating by the linear
transformation ($z\to y,\; y\to z,\;x\to x$), we obtain the automorphism
$$\theta: x\to x,\; y\to y+bx^{m-1},\; z\to z.$$
Composing this on the right by $\psi$, we get the automorphism
$$\varphi: x\to x,\; y\to y+bx^{m-1},\; z\to z+yx+bx^m.$$ Note
that
$$\varphi\circ\theta^{-1}: x\to x,\; y\to y,\;
z\to z+xy+bx^m.$$
Now we see that
$$\psi^{-1}\circ\varphi\circ\theta^{-1}=\phi^b_m$$ and
the lemma is proved.
\end{proof}

In our situation, $x,y$ and $z$ can be set to be any three distinct generators among $x_1,\ldots, x_n$; as renaming the generators
is an automorphism from $\GL_n(K)$ and we assume $\Phi$ to stabilize all such automorphisms, Lemma \ref{LmR1} has the
following consequence.

\begin{lemma}\label{lemsubgroupcontents}
Define the subgroup $G_{\psi}\subset \Aut_0 K[x_1,\ldots, x_n]$ as the group generated by $\GL_n(K)$ and
$$
\psi: x_1\to x_1+x_2x_3,\;\;x_2\to x_2,\;\;\ldots,\;\; x_n\to x_n.
$$
Then $G_{\psi}$ contains:

-- automorphisms
$$
\psi_i: x_i\mapsto x_i+x_{i+1}x_{i+2},\; x_k\mapsto x_k,\; k\ne i
$$
where $i$ runs from $1$ to $n$ and the labelling of generators is cyclic;

-- every automorphism of the form
$$
\varphi_{i,j,k,\beta}: x_i\to x_i +\beta x_j^k,\;\; x_l\to x_l\;\;(l\neq i)
$$
$k\geq 1$, $\beta\in K$, $i,j = 1,\ldots, n$, $j\neq i$.
\end{lemma}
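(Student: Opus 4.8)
The plan is to deduce Lemma \ref{lemsubgroupcontents} from Lemma \ref{LmR1} purely by conjugation and relabelling inside $G_\psi$, exploiting that $G_\psi$ is a group containing all of $\GL_n(K)$ and hence all generator-permutations and all invertible linear substitutions. First I would dispense with the automorphisms $\psi_i$: each $\psi_i$ is obtained from $\psi=\psi_1$ by conjugating with the linear automorphism that cyclically permutes the generators $x_1,\dots,x_n$ (sending $x_k\mapsto x_{k+1}$ with indices mod $n$), raised to the appropriate power; since this permutation lies in $\GL_n(K)\subset G_\psi$ and $\psi\in G_\psi$, each $\psi_i\in G_\psi$. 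This is immediate and needs no computation beyond writing the conjugate.

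Next I would produce the elementaries $\varphi_{i,j,k,\beta}$. The key observation is that although $\psi$ involves three generators $x_1,x_2,x_3$, we may feed into Lemma \ref{LmR1} \emph{any} three distinct generators $x=x_a$, $y=x_b$, $z=x_c$, because the identification of $(x,y,z)$ with a specific ordered triple of distinct generators is realised by a permutation matrix in $\GL_n(K)$, and the ``linear transformations of $K^3$'' appearing in the lemma extend to linear automorphisms of $K[x_1,\dots,x_n]$ fixing the remaining $n-3$ generators (these too lie in $\GL_n(K)$). Thus Lemma \ref{LmR1} applied to the triple $(x_j,x_l,x_i)$ — for any auxiliary generator $x_l$ distinct from $x_i,x_j$, which exists since $n\geq 4$ — shows that the group generated by $\GL_n(K)$ and $\psi_{i}$-type maps contains $x_i\to x_i+\beta x_j^k$ while fixing all other generators. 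Since this group is contained in $G_\psi$ by the first part, we get $\varphi_{i,j,k,\beta}\in G_\psi$ for all admissible $i,j,k,\beta$.

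The only point requiring a little care — and the step I would flag as the main (mild) obstacle — is the bookkeeping that Lemma \ref{LmR1} as stated uses the \emph{specific} quadratic map $z\to z+xy$, so to apply it with $(x,y,z)=(x_j,x_l,x_i)$ I need $x_i\to x_i+x_jx_l\in G_\psi$. This is not literally one of the $\psi_i$ (whose quadratic term is $x_{i+1}x_{i+2}$), but it is obtained from $\psi_i$ by a linear change of the two generators $x_{i+1},x_{i+2}$ into $x_j,x_l$ followed by a permutation — all inside $\GL_n(K)$ — provided $i,j,l$ are three distinct indices, which we may always arrange (first relabel so that the ``$z$''-slot is $x_i$, then the ``$x$''-slot is $x_j$, then pick any remaining generator for ``$y$''). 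So one should phrase the argument as: conjugating $\psi$ by suitable elements of $\GL_n(K)$ yields $x_i\to x_i+x_jx_l$ for every triple of distinct indices $(i,j,l)$; then Lemma \ref{LmR1}, with $x,y,z$ set to $x_j,x_l,x_i$ respectively and the ambient linear transformations taken in $\GL_n(K)$ acting trivially on the other coordinates, yields $\phi^\beta_k\colon x_i\to x_i+\beta x_j^k$. Tracking which conjugating matrices are used is routine; no genuine difficulty arises because every map invoked in the proof of Lemma \ref{LmR1} is either linear or the quadratic map itself, both of which we have shown lie in $G_\psi$. This completes the verification of both bullet points.
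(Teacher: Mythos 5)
Your argument is correct and is essentially the paper's own justification: the paper derives Lemma \ref{lemsubgroupcontents} from Lemma \ref{LmR1} in exactly this way, remarking only that $x,y,z$ may be taken to be any three distinct generators since renaming generators is an element of $\GL_n(K)$. You have simply spelled out the conjugation bookkeeping that the paper leaves implicit.
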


We are now ready to proceed with the proof of the \textbf{main result} (Theorem \ref{thmripsmain}) in accordance with the plan
stated at the beginning of this subsection. In order to establish that the multiplication constants $a$ by which $\Phi$ acts on
various monomials $M$, when
$$
\varphi: x_1\to x_1 + M(x_2,\ldots, x_n),
$$
are all equal to one, we distinguish two cases, according to the composition of $M$:

-- the "\textbf{good}" case: there exists $k$ which is not congruent to $-1$ modulo $p = \Ch K$ such that a generator $x_i$ is
present in $M$ as $x_i^k$;

-- the "\textbf{bad}" case: all powers $i_2,\ldots, i_n$ of $x_2,\ldots, x_n$ in $M$ are congruent to $-1$ modulo $p$.

We proceed with the analysis of \textbf{the good case} first. Suppose $M$ is of the form\\
 $x_i^kM'(x_2,\ldots, \hat{x}_i,\ldots, x_n)$
with $i\neq 1$, $k+1$ not divisible by $p=\Ch K$ and $M'$ independent of $x_i$. We may rename the generators (utilizing
stabilization of linear mappings under $\Phi$) so that $x_i\to x_2$ and $M'$ will be a monomial in $(n-2)$ variables
$x_3,\ldots, x_n$. Such $M'$ correspond to elementary automorphisms which turn out to be an important special case in our
proof.

We process the special case -- namely of those automorphisms $\varphi$ which are of the form (we drop the prime here for
brevity and denote the monomial by $M$)
$$
\varphi: x_1\to x_1 + M(x_3,\ldots, x_n),\;\; x_2\to x_2,\;\;\ldots,\;\; x_n\to x_n.
$$
Note that, as $n$ is at least four, the monomial $M$ depends on at least two variables. We need to prove the following.

\begin{theorem}\label{thmgoodinduction}
  Let
  $$
\varphi: x_1\to x_1 + M(x_3,\ldots, x_n),\;\; x_2\to x_2,\;\;\ldots,\;\; x_n\to x_n.
$$
be an elementary automorphism of the special form (i.e. $M$ is a monomial in $x_3,\ldots, x_n$) and $\Phi$ as before. Then
$$
\Phi(\varphi) = \varphi.
$$
\end{theorem}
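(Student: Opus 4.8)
The plan is to establish the theorem by induction on the number $r\geq 1$ of distinct variables occurring in the monomial $M$, and in fact to prove it for \emph{every} monomial $M$ in $x_3,\ldots,x_n$ of total degree $\geq 2$ (no congruence condition on the exponents of $M$ is needed, and the characteristic of $K$ plays no role at this stage). By Proposition \ref{propphiaction} we already know $\Phi(\varphi)\colon x_1\to x_1+aM$ for some $a\in K^{\times}$, so it suffices to prove $\Phi(\varphi)=\varphi$. We use throughout that, in the reduced situation under consideration (Theorem \ref{thmstabquadrmain}), $\Phi$ fixes $\GL_n(K)$ pointwise --- hence fixes every point of the maximal torus, so that Proposition \ref{propphiaction} applies --- and also fixes $\psi$; consequently $\Phi$ fixes pointwise the whole subgroup $G_{\psi}$, in particular every $\psi_i$ and every $\varphi_{i,j,k,\beta}$ of Lemma \ref{lemsubgroupcontents}.

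First I would record two elementary computations. (i) For monomials $P,Q$ in $x_3,\ldots,x_n$, put $\sigma\colon x_2\to x_2+Q$ and $\tau\colon x_1\to x_1+x_2P$ (identity on the other generators); substituting one generator at a time yields the commutator identity $[\sigma,\tau]=\sigma\tau\sigma^{-1}\tau^{-1}\colon x_1\to x_1+QP$, acting trivially on $x_2,\ldots,x_n$ (the two intermediate terms $\pm x_2P$ cancel). (ii) For each $j\in\{3,\ldots,n\}$ and each $k\geq 1$, the automorphism $\tau_{j,k}\colon x_1\to x_1+x_2x_j^{k}$ lies in $G_{\psi}$: for $k=1$ it is $\psi_1$ after a linear renaming of generators; for $k\geq 2$ it is a short commutator of such a renamed copy of $\psi_1$ (namely $x_1\to x_1+x_2x_l$ with $l\notin\{1,2,j\}$, which exists since $n\geq 4$) with $\varphi_{l,j,k,1}\colon x_l\to x_l+x_j^{k}$, both of which lie in $G_{\psi}$ by Lemma \ref{lemsubgroupcontents}. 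Hence $\Phi(\tau_{j,k})=\tau_{j,k}$.

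With these in hand I would run the induction. If $r=1$ then $M=x_j^{k}$ with $k\geq 2$, so $\varphi=\varphi_{1,j,k,1}\in G_{\psi}$ and $\Phi(\varphi)=\varphi$. If $r\geq 2$, choose a variable $x_j$ occurring in $M$, let $P=x_j^{k_j}$ be its full power in $M$, and set $Q=M/P$, a monomial in exactly $r-1$ variables none of which is $x_1$ or $x_2$, with $QP=M$. By (i), $\varphi=[\sigma,\tau_{j,k_j}]$ with $\sigma\colon x_2\to x_2+Q$, and by (ii) $\Phi(\tau_{j,k_j})=\tau_{j,k_j}$. For $\sigma$ there are two cases: if $r=2$, then $Q=x_{j'}^{l}$ is a single power, so $\sigma$ is either linear ($l=1$) or equal to $\varphi_{2,j',l,1}$, hence $\sigma\in G_{\psi}$ and $\Phi(\sigma)=\sigma$; if $r\geq 3$, then conjugating $\sigma$ by the linear transposition $x_1\leftrightarrow x_2$ turns it into an elementary automorphism $x_1\to x_1+Q$ of exactly the special form of the theorem, with $Q$ a monomial in $r-1<r$ variables and of degree $\geq r-1\geq 2$, so $\Phi(\sigma)=\sigma$ by the induction hypothesis (using again that $\Phi$ fixes linear automorphisms). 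Since $\Phi$ is a homomorphism, $\Phi(\varphi)=[\Phi(\sigma),\Phi(\tau_{j,k_j})]=[\sigma,\tau_{j,k_j}]=\varphi$, which completes the induction.

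The substitution identities (i) and (ii) are routine. The step I expect to require the most care is the organization of the inductive step: one must peel off a \emph{full} variable-power $x_j^{k_j}$ rather than a single factor $x_j$, so that the auxiliary automorphism $\sigma$ carries a monomial in strictly fewer variables; and one must check that conjugating $\sigma$ by $x_1\leftrightarrow x_2$ lands inside the class of automorphisms to which the induction hypothesis applies, rather than producing a monomial in which $x_1$ reappears. Together with the verification in (ii) that $x_1\to x_1+x_2x_j^{k}$ belongs to $G_{\psi}$ --- the one place where stabilization of $\psi$, and not merely of $\GL_n(K)$, is genuinely used --- I regard this as the main obstacle.
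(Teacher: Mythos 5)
Your proof is correct, but it organizes the induction step differently from the paper, and the comparison is worth recording. Both arguments induct on the number of distinct variables in $M$, both peel off the full power $x_j^{k_j}$ of one variable so that the residual monomial lives in strictly fewer variables, and both rest on Lemma \ref{lemsubgroupcontents} plus commutator identities of the type ``$[x_i\to x_i+N,\ x_1\to x_1+x_iN']\colon x_1\to x_1+NN'$''. The difference is in how $\varphi$ is reassembled. The paper conjugates $\psi$ by $\theta\circ\alpha$ (with $\alpha\colon x_2\to x_2+x_3^k$ and $\theta\colon x_3\to x_3+M'$) and obtains $x_1\to x_1+x_2x_3+M-x_3^{k+1}-x_2M'$, which it must then split into four commuting elementary factors $\varphi$, $\psi$, $\varphi_{1,3,k+1,-1}$ and $\varphi_1\colon x_1\to x_1-x_2M'$, the last of which is separately exhibited as the commutator $\psi^{-1}\theta^{-1}\psi\theta$; the induction hypothesis is applied to $\theta$. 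You instead first show, via your step (ii), that $\tau_{j,k}\colon x_1\to x_1+x_2x_j^{k}$ already lies in $G_{\psi}$ (as a commutator of a renamed $\psi$ with $\varphi_{l,j,k,1}$, using $n\geq 4$ to find a spare index $l$), and then realize $\varphi$ as the \emph{single} commutator $[\sigma,\tau_{j,k_j}]$ with $\sigma\colon x_2\to x_2+Q$ handled by the induction hypothesis after conjugating by the transposition $x_1\leftrightarrow x_2$. This eliminates the bookkeeping with the extraneous terms $x_2x_3$ and $x_3^{k+1}$ entirely, at the price of the extra (but short) verification in (ii); I checked your two substitution identities and the case split at $r=2$ ($Q$ a pure power, so $\sigma$ is either a transvection or a $\varphi_{2,j',l,1}$), and everything goes through. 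Your parenthetical remark that no congruence condition on the exponents and no hypothesis on $\operatorname{Char}K$ enters at this stage is also consistent with the paper: the ``good''/``bad'' dichotomy only matters in the subsequent reduction of general elementary automorphisms to this special form.
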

\begin{proof}
  We proceed by induction on the number of variables in $M$ in the following form: every automorphism which acts nontrivially on a single generator by adding to it a monomial in at most $(n-2)$ other generators
  is stable under $\Phi$.

  The base case of one variable is a special case of Lemma \ref{lemsubgroupcontents}. To arrive at the proper configuration for the induction step, consider the automorphism $\varphi$ written down in the statement of
  the theorem, set
  $$
M = x_3^k M'(x_4,\ldots, x_n)
  $$
and introduce the following automorphisms:
$$
\alpha: x_2\to x_2 + x_3^k, \;\; x_i\to x_i\;\;\ (i\neq 2),
$$
$$
\theta: x_3\to x_3 + M',\;\; x_i\to x_i \;\; (i\neq 3),
$$
and
$$
\psi: x_1\to x_1 + x_2x_3,\;\; x_i\to x_i \;\; (i\neq 1)
$$
(the last one is as before). Consider the composite automorphism $\alpha^{-1}\circ\theta^{-1}\circ \psi\circ\theta\circ\alpha$ and
its action on $x_1$, $x_2$ and $x_3$ (it evidently acts as the identity map on all the other generators). Step by step, we write:
$$
\alpha^{-1}\theta^{-1} \psi\theta\alpha: x_1\to x_1 \to x_1 \to x_1+x_2x_3\to x_1 + x_2(x_3 - M')\to x_1 + (x_2-x_3^k)(x_3-M');
$$
$$
x_2\to x_2+x_3^k\to x_2+(x_3+M')^k\to  x_2+(x_3+M')^k\to x_2 + x_3^k\to x_2;
$$
$$
x_3\to x_3 \to x_3 + M'\to x_3 + M'\to x_3 \to x_3.
$$
Thus, the action of $\alpha^{-1}\theta^{-1} \psi\theta\alpha$ is
$$
x_1\to x_1 + (x_2-x_3^k)(x_3-M'),\;\; x_i\to x_i \;\; (i\neq 1),
$$
or
$$
x_1 \to x_1 + x_2x_3 + M - x_3^{k+1}- x_2M'.
$$
The last line shows that the composite automorphism $\alpha^{-1}\theta^{-1} \psi\theta\alpha$ is actually composed of (cf.
comment after Definition \ref{defelementaryaut}) our initial automorphism $\varphi$ (corresponding to $M$), the
automorphism $\psi$, the automorphism $\varphi_{1,3,k+1,-1}$ (in the notation of Lemma \ref{lemsubgroupcontents})
corresponding to $(-x_3^{k+1})$ which belongs to the group $G_{\psi}$, and, lastly, the automorphism
$$
\varphi_1: x_1\to x_1 - x_2M'.
$$
The order in which the enumerated mappings add up to $\varphi$ is irrelevant, as all the elementary automorphisms in question
act on a single generator $x_1$ and therefore commute with one another in accordance with the comment following Definition
\ref{defelementaryaut}.

Now, observe that the automorphism $\varphi_1$ can be represented as the commutator
$$
\psi^{-1}\circ\theta^{-1}\circ
\psi\circ\theta,
$$
as direct computation of the action of the commutator on $x_1$, $x_2$ and $x_3$ shows.

We now apply $\Phi$ to the equality
$$
\varphi_1 = \psi^{-1}\circ\theta^{-1}\circ \psi\circ\theta
$$
to get
$$
\Phi(\varphi_1) = \psi^{-1}\circ \Phi(\theta)^{-1}\circ\psi\circ\Phi(\theta)
$$
as $\psi\in G_{\psi}$.

The action of $\Phi$ on the composite automorphism $\alpha^{-1}\theta^{-1} \psi\theta\alpha$ is, on the one hand, given by
$$
\alpha^{-1}\Phi(\theta)^{-1} \psi\Phi(\theta)\alpha
$$
(by Lemma \ref{lemsubgroupcontents}), while on the other hand it equals
$$
\Phi(\varphi)\psi\varphi_{1,3,k+1,-1}\Phi(\varphi_1) = \Phi(\varphi)\psi\varphi_{1,3,k+1,-1}\psi^{-1}\circ \Phi(\theta)^{-1}\circ\psi\circ\Phi(\theta).
$$
The two expressions are equal to each other, and this sets the stage for the implementation of the induction step. Namely, by
construction $\theta$ is an automorphism with monomial $M'$ on $(n-3)$ generators; the induction step now tells us that
$\Phi(\theta) = \theta$, therefore $\Phi(\varphi_1) = \varphi_1$ and, finally, from
$$
\Phi(\varphi)\psi\varphi_{1,3,k+1,-1}\varphi_1 =\alpha^{-1}\theta^{-1} \psi\theta\alpha  = \varphi\psi\varphi_{1,3,k+1,-1}\varphi_1
$$
it follows that $\Phi(\varphi) = \varphi$ as desired.
\end{proof}

The general "good" case reduces to the special case of Theorem \ref{thmgoodinduction} by means of the following argument.
Suppose
$$
\varphi: x_1\to x_1 + M(x_2,\ldots, x_n),\;\;x_2\to x_2,\;\;\ldots,\;\;x_n\to x_n
$$
is a good elementary automorphism and $M = x_2^kM'(x_3,\ldots, x_n)$ with $(k+1)$ not a multiple of $\Ch K = p$ (as always,
no generality is lost as the generators can be renamed). Then
$$
\varphi_1: x_2\to x_2 + M', \;\;x_i\to x_i\;\;(i\neq 2)
$$
is a good automorphism of the special form: by Theorem \ref{thmgoodinduction} $\Phi(\varphi_1) = \varphi_1$.

Let
$$
\alpha: x_1\to x_1 + x_2^{k+1}
$$
be an elementary automorphism. Consider the automorphism
$$
\delta = \varphi_1\circ\alpha\circ\varphi_1^{-1}.
$$
Evaluating its action on the involved generators $x_1$ and $x_2$ yields
$$
x_1\to x_1\to x_1+ x_2^{k+1}\to x_1 + (x_2+ M')^{k+1};
$$
$$
x_2\to x_2 - M' \to x_2 - M'\to x_2 +M' - M' = x_2.
$$
Therefore, $\delta$ acts nontrivially on $x_1$ only and is thus given by a product of mutually commuting elementary
automorphisms corresponding to monomials in the binomial
$$
(x_2 + M')^{k+1}.
$$
Among those is the term $x_2^kM' = M$ corresponding to the initial automorphism $\varphi$, and the term is present with
coefficient $(k+1)$ which is not zero by assumption of the good case. Therefore
$$
\delta = \varphi\circ\tilde{\varphi}
$$
where $\tilde{\varphi}$ is the product of the rest of the elementaries.

Now, by Lemma \ref{lemsubgroupcontents} and Theorem \ref{thmgoodinduction},
$$
\Phi(\delta) = \delta,
$$
which means that
$$
\Phi(\varphi)\circ\Phi(\tilde{\varphi}) = \varphi\circ\tilde{\varphi}.
$$
As $\delta$ changes $x_1$ by adding various monomials $N$, the action of $\Phi$ reduces to multiplication of every such
monomial by the corresponding constant $a = a_N$. The previous equation, on the other hand, is just an equality of two
polynomials, which means that the coefficient of every monomial on the left-hand side must be equal to the coefficient of the
corresponding monomial on the right-hand side. The contribution of $\Phi(\varphi)$ to the polynomial of the left-hand side is
$aM$ (where $a$ is as in Proposition \ref{propphiaction}), while in the right-hand side polynomial the monomial $M$ comes
from $\varphi$ with coefficient one. Therefore, $a=1$ and the good case is thus processed.

\textbf{The "bad" case} -- i.e. when
$$
\varphi: x_1\to x_1 + M(x_2,\ldots, x_n),
$$
is such that every $i_l$ in $M = b x_2^{i_2}\ldots x_n^{i_n}$ is congruent to $-1$ modulo $p$ -- can in actuality be reduced to
the good case by elementary transformations. The argument is as follows.

Suppose first, for the sake of definiteness, that $i_2$ is not zero (evidently, no loss of generality results from such an assumption,
as the generators can be renamed).

As before, the action of $\Phi$ on
$$
\varphi: x_1\to x_1 + b x_2^{i_2}\ldots x_n^{i_n},\;\;x_2\to x_2,\;\;\ldots, \;\; x_n\to x_n
$$
yields an automorphism
$$
\Phi(\varphi): x_1 \to x_1 + ab x_2^{i_2}\ldots x_n^{i_n},\;\;x_2\to x_2,\;\;\ldots, \;\; x_n\to x_n.
$$
Now, for arbitrary $\lambda\in K^{\times}$, take a triangular linear automorphism
$$
\Lambda:x_2\to x_2 + \lambda x_3
$$
and consider
$$
\Lambda\circ\varphi\circ\Lambda^{-1}: x_1\to x_1 + b(x_2 + \lambda x_3)^{i_2}x_3^{i_3}\ldots x_n^{i_n}.
$$
The resulting automorphism is elementary of the form
$$
x_1\to x_1 + P(x_2,\ldots, x_n)
$$
with $P$ homogeneous of degree $(i_2+\cdots+i_n)$.

The key property, which is now manifest, is that $P$ contains good monomials -- one such monomial is
$$
\lambda b i_2x_2^{i_2-1}x_3^{i_3+1}x_4^{i_4}\ldots x_n^{i_n}\equiv -\lambda b x_2^{i_2-1}x_3^{i_3+1}x_4^{i_4}\ldots x_n^{i_n}
$$
as $i_2$ is congruent to $(-1)$ modulo $p\neq 2$.

The action of $\Phi$ leads to
$$
\Phi(\Lambda\circ\varphi\circ \Lambda^{-1})  = \Lambda\circ\Phi(\varphi)\circ \Lambda^{-1}
$$
and, as the automorphism acted upon is elementary, corresponds to multiplying every monomial of $P$ (left-hand side) by an
appropriate constant (specific to that monomial); by the preceding results every good monomial gets multiplied by one. As
before, the equality with the automorphism on the right-hand side is the equality between two polynomials. What is left is to
compare the coefficients of the monomial $x_2^{i_2-1}x_3^{i_3+1}x_4^{i_4}\ldots x_n^{i_n}$ on the two sides of the
equation. The left-hand side yields
$$
b\lambda
$$
(as the monomial is good), while the right-hand side, being the result of conjugation by $\Lambda$ of a single monomial $aM$
($a$ is the action of $\Phi$) yields
$$
ab\lambda
$$
from which we immediately obtain $a=1$. Thus the bad case is also processed.

Finally, combining the results of the previous subsection culminating in Theorem \ref{thmstabquadrmain} with the results we
have obtained leads to the proof of Theorem \ref{thmripsmain}. The main result of this paper is thus established.

\section{Discussion}

An immediate corollary of the main Theorem \ref{thmripsmain} is the following observation.

\begin{corollary} \label{corinnermain}
  Suppose $\Phi$ is an automorphism of $\TAut_0 K[x_1,\ldots, x_n]$ ($n>3$, $K$ algebraically closed of characteristic not two) which preserves every point of the maximal torus $T^n$. Then $\Phi$ is inner.
\end{corollary}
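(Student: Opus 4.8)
The plan is to obtain Corollary \ref{corinnermain} as a direct consequence of Theorem \ref{thmripsmain}, once we observe that the proof of that theorem never evaluates $\Phi$ outside the tame subgroup. Concretely, every automorphism produced or manipulated in the arguments of Section 2 — the elements of the torus-centralizers $S$ of Lemmas \ref{lemtorusquadr} and \ref{lemtorusgen}, their commutators (Lemma \ref{lemcommsubgrquadr} and its analogs), the homothety $H$ of Theorem \ref{thmstabquadr}, the conjugating linear maps $\Lambda$, the auxiliaries $\alpha,\theta,\psi,\varphi_1$ and the composites appearing in Theorem \ref{thmgoodinduction}, and the group $G_\psi$ of Lemma \ref{lemsubgroupcontents} — is by construction a product of linear automorphisms and elementary automorphisms, hence already lies in $\TAut_0 K[x_1,\ldots,x_n]$. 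Moreover, if $\Phi$ fixes $T^n$ point-wise then so does $\Phi^{-1}$, so $\Phi$ restricts to an automorphism of $\GL_n(K)$ (using Lemma \ref{LmLindiag}, which applies since $\TAut_0\subset\Aut_0$), and the classical input Theorem \ref{thmlinearstab} may be invoked for $\Phi|_{\GL_n(K)}$ regardless of whether the ambient group is $\Aut_0$ or $\TAut_0$. Thus the entire chain of lemmas and theorems leading to Theorem \ref{thmripsmain} goes through verbatim with $\Aut_0 K[x_1,\ldots,x_n]$ replaced throughout by $\TAut_0 K[x_1,\ldots,x_n]$.

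Granting this, suppose $\Phi$ is an automorphism of $\TAut_0 K[x_1,\ldots,x_n]$ fixing every point of $T^n$. By the $\TAut_0$ version of Theorem \ref{thmripsmain} there is $S\in\GL_n(K)$ such that $\Phi_S$, the composition of $\Phi$ with conjugation by $S$, fixes point-wise every elementary automorphism; and, exactly as in the passage surrounding Theorems \ref{thmstabquadr}--\ref{thmstabquadrmain}, one may choose $S$ so that $\Phi_S$ also fixes point-wise $\GL_n(K)$. Now $\TAut_0 K[x_1,\ldots,x_n]$ is, by definition, generated by $\GL_n(K)$ together with the elementary automorphisms, so the homomorphism $\Phi_S$ fixes a generating set and hence $\Phi_S=\Id$. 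Therefore $\Phi$ coincides on $\TAut_0 K[x_1,\ldots,x_n]$ with the inner automorphism given by $S^{-1}$; since $S^{-1}\in\GL_n(K)\subset\TAut_0 K[x_1,\ldots,x_n]$, this exhibits $\Phi$ as an inner automorphism of $\TAut_0 K[x_1,\ldots,x_n]$, which is precisely the assertion of Corollary \ref{corinnermain}.

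The only genuine task is the bookkeeping of the first paragraph, namely checking that no step of the proof of Theorem \ref{thmripsmain} secretly requires applying $\Phi$ to a non-tame automorphism; I expect this to be routine but slightly tedious, amounting to walking through each statement of Section 2 and confirming that its hypotheses, conclusions, and all explicit witnesses remain inside $\TAut_0$. A tidier presentation, which I would adopt in the final write-up, is to state the $\TAut_0$ analogue of Theorem \ref{thmripsmain} explicitly and remark that its proof is word-for-word the one already given, so that Corollary \ref{corinnermain} follows from it together with the two elementary observations that $\GL_n(K)$ and the elementary automorphisms generate $\TAut_0 K[x_1,\ldots,x_n]$ and that the conjugating element $S$ lies in $\TAut_0 K[x_1,\ldots,x_n]$.
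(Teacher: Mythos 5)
Your proposal is correct and follows essentially the route the paper intends: the paper treats this corollary as an immediate consequence of Theorem \ref{thmripsmain} (via the observation that $\GL_n(K)$ and the elementary automorphisms generate $\TAut_0$ and that the conjugating element $S$ is itself tame), offering no further proof. Your additional bookkeeping --- verifying that the proof of Theorem \ref{thmripsmain} only ever applies $\Phi$ to tame automorphisms, so that it transfers verbatim to $\Aut\TAut_0$ --- is a genuine point the paper glosses over, and your check of it is accurate.
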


In the general case of $\Phi\in \Aut\Aut K[x_1,\ldots, x_n]$, the additional restriction of $\Phi$ being an $\Ind$-automorphism
is required for the topological techniques developed in \cite{KBYu} to work. At that point Theorem \ref{thmripsmain} becomes
(modulo conjugation by an automorphism) a statement on tame automorphism preservation.

The Rips conjecture (Conjecture \ref{conjrips}) remains a noteworthy combinatorial problem in positive characteristic. We defer
it, along with the resolution of the complications with the subspaces of homogeneous polynomials in positive characteristic, to
future work.

One final point we wish to make concerns our handling of the linear case (Theorem \ref{thmlinearstab}). We feel that
employment of heavy machinery (Theorem \ref{thmmcdwat} of McDonald and Waterhouse -- or, alternatively, results in
\cite{HaOM}) amounts to cracking a moderately hard nut with a comparatively alien (K-theoretic) sledgehammer. In all
likelihood, a purely combinatorial approach to tackle the linear case more in line with our standard techniques can be devised,
although it may possibly involve utilizing more elaborate torus actions or spaces $S$ of automorphisms defined by such (perhaps
not commuting with a fixed action, but rather obeying a more complicated identity involving it). We leave it as an exercise to the
interested reader.


\begin{thebibliography}{99}


\bibitem{An} D. Anick. Limits of tame automorphisms of $K[x_1,\ldots,x_N]$. J. Algebra,
    82, 459-468 (1983).


\bibitem{BCW} H. Bass, E.H. Connell, D. Wright. The Jacobian conjecture: reduction of degree and
    formal expansion of the inverse. Amer. Math. Soc. Bull., New Series, 7 (2) (1982): 287–330,
    doi:10.1090/S0273-0979-1982-15032-7, ISSN 1088-9485, MR 0663785.


\bibitem{BBRY} A. Belov, L. Bokut, L. Rowen and J.-T. Yu. The Jacobian Conjecture, together with
    Specht and Burnside-type problems. In Automorphisms in Birational and Affine Geometry (pp.
    249-285). Springer (2014).


\bibitem{TA1} A. Belov-Kanel, A. Elishev, F. Razavinia, Yu Jie-Tai and W. Zhang. Noncommutative
    Bia\l{}ynicki-Birula Theorem, Chebyshevskii Sbornik, 2020, 21(1): 51-61, arXiv: 1808.04903.


\bibitem{K-BK1} A. Belov-Kanel and M. Kontsevich. Automorphisms of the Weyl algebra. Lett. Math. Phys.
    74 (2005), 181-199.

\bibitem{K-BK2} A. Belov-Kanel and M. Kontsevich. The Jacobian Conjecture is stably equivalent to
    the Dixmier Conjecture. Mosc. Math. J., 7:2 (2007), 209--218; arXiv: math/0512171v2, 2005.


\bibitem{KBYu} A. Belov-Kanel, J.-T. Yu, A. Elishev. On the Augmentation Topology on Automorphism Groups of
    Affine Spaces and Algebras, arXiv:1207.2045, Int. J. Alg. Comp., https://doi.org/10.1142/S0218196718400040.


\bibitem{Bialynicki-Birula1} A. Bia\l{}ynicki-Birula. Remarks on the
 action of an algebraic torus on $k^{n}$, Bull. Acad.
Polon. Sci. Ser. Sci. Math. Astro. Phys. 14 (1966) 177-181.


\bibitem{Bialynicki-Birula2} A. Bia\l{}ynicki-Birula. Remarks on the
 action of an algebraic torus on $k^{n}$ II, Bull. Acad.
Polon. Sci. Ser. Sci. Math. Astr. Phys. 15 (1967) 123-125.


\bibitem{Dix} J. Dixmier. Sur les algebres de Weyl. Bull. Soc. Math. France, 96 (1968), 209-242.


\bibitem{vdE} A. van den Essen. Polynomial automorphisms and the Jacobian conjecture. Progress in
    Mathematics, 190. Birkhauser Verlag, Basel, 2000.


\bibitem{FurKr} J.-P. Furter and H. Kraft. On the geometry of the automorphism groups of affine
    varieties, arXiv:1809.04175.


\bibitem{HaOM} A. J. Hahn and O. T. O'Meara. The classical groups and K-theory, Springer-Verlag Berlin
    Heidelberg (1989).


\bibitem{KGE}   A. Kanel-Belov, S. Grigoriev, A. Elishev, J.-T. Yu and W. Zhang. Lifting of
    Polynomial Symplectomorphisms and Deformation Quantization, Comm. in Algebra,
    46:9 (2018), 3926-3938, arXiv:1707.06450.

\bibitem{Keller} O.-H. Keller. Ganze Cremona-Transformationen. Monatshefte Math. Phys., 47 (1):
    299–306, 1939.


\bibitem{McD} B. R. McDonald. Automorphisms of $\GL_n(R)$, Trans. Amer. Math. Soc., Vol 246 (1978), 155 --
    171.


\bibitem{Moh1} T.-T. Moh. On the Jacobian conjecture and the configurations of roots. J. reine
    angew. Math., 340 (340) (1983): 140–212, doi:10.1515/crll.1983.340.140, ISSN 0075-4102, MR
    0691964.


\bibitem{BIP2} B. I. Plotkin. Algebras with the same (algebraic) geometry. Proc. Steklov Inst.
    Math, Vol. 242, 2003, 165-196.

\bibitem{BIP1} B. I. Plotkin. Varieties of algebras and algebraic varieties. Israel J. of
    Mathematics, 1996, Vol. 96, No. 2, 511-522.

\bibitem{Shafarevich} I. R. Shafarevich.  On some infinite-dimensional groups II. Izv. Akad. Nauk
    SSSR Ser. Mat., 45:1 (1981), 214–226; Math. USSR-Izv., 18:1 (1982), 185–194.

\bibitem{Tsu2} Y. Tsuchimoto. Endomorphisms of Weyl algebra and $p$-curvatures. Osaka Journal of
    Mathematics, vol. 42, no. 2 (2005).

\bibitem{Tsu1} Y. Tsuchimoto. Preliminaries on Dixmier conjecture. Mem. Fac. Sci, Kochi Univ. Ser.
    A Math. 24 (2003), 43-59.


\bibitem{Wat} W. C. Waterhouse. Automorphisms of $\GL_n(R)$, Proc. Amer. Math. Soc., Vol. 79 No. 3 (Jul. 1980),
    347 -- 351.


\end{thebibliography}
\end{document}